\keywords{Constructive Analysis, Exponentiation, Logarithm, Point-free Topology, Geometric Logic, Places}
\def\foo{\,\ThisStyle{\ensurestackMath{%
			\bigsqcup\stackengine{-0pt}{\!}{\SavedStyle\!^{\mathord{\uparrow}}}{O}{l}{F}{T}{S}}}\,}
\DeclareMathOperator*{\foobarX}{\foo}
\newcommand\dirsup{\!\foobarX}  
\newextarrow{\xbigtoto}{{20}{20}{20}{20}}
{\bigRelbar\bigRelbar{\bigtwoarrowsleft\rightarrow\rightarrow}}
\newcommand{\Reals}{\mathbb{R}}
\newcommand{\RIdl}{\mathsf{RIdl}}
\newcommand{\Spec}{\mathrm{Spec}}
\newcommand{\Set}{\mathrm{Set}}
\newcommand{\opp}{\mathrm{op}}    
\newcommand{\argu}{\text{(---)}}
\newcommand{\thT}{\mathbb{T}}     
\newcommand{\thU}{\mathbb{U}}     
\newcommand{\baseS}{\mathcal{S}}  
\newcommand{\pt}{\mathsf{pt}}     
\newcommand{\cosimp}[3]{\xymatrix@1{#1 \ar@<.4ex>[r] \ar@<-.4ex>[r] & {\ }#2 \ar@<0.8ex>[r] \ar[r] \ar@<-.8ex>[r] & {\ } #3 \ar@<1.2ex>[r] \ar@<.4ex>[r] \ar@<-.4ex>[r] \ar@<-1.2ex>[r] & \cdots }} 
\newcommand{\equalizer}[2]{\xymatrix@1{#1 \ar@<.4ex>[r] \ar@<-0.4ex>[r] & {\ } #2}}
\newcommand{\adjunction}[4]{\xymatrix@1{#1{\ } \ar@<-0.3ex>[r]_{ {\scriptstyle #2}} & {\ } #3 \ar@<-0.3ex>[l]_{ {\scriptstyle #4}}}} 
\newcommand{\ropen}[1]{[#1)} 
\newcommand{\lopen}[1]{(#1]} 
\theoremstyle{definition}
\newtheorem{discussion}[thm]{Discussion}
\begin{document}

	\title{Point-free Construction of Real Exponentiation}
	\author[M.~Ng]{Ming Ng\lmcsorcid{0000-0002-2170-0247}}	

	\author[S.~Vickers]{Steven Vickers\lmcsorcid{0000-0003-1907-9014}}	

	\address{School of Computer Science,
		The University of Birmingham,
		Birmingham, B15 2TT, UK.}	
	\email{mxn732@bham.ac.uk, s.j.vickers@cs.bham.ac.uk}  
    \subjclass[2020]{Primary 26E40; Secondary 18F10,18F70}

	\begin{abstract}
		We define a point-free construction of real exponentiation and logarithms, i.e.\ we construct the maps $\exp\colon (0, \infty)\times \mathbb{R} \rightarrow \!(0,\infty),\, (x, \zeta) \mapsto x^\zeta$ and $\log\colon (1,\infty)\times (0, \infty) \rightarrow\mathbb{R},\, (b, y) \mapsto \log_b(y)$, and we develop familiar algebraic rules for them.
		The point-free approach is constructive, and defines the points of a space as models of a geometric theory, rather than as elements of a set --- in particular, this allows geometric constructions to be applied to points living in toposes other than $\Set$.
		Our geometric development includes new lifting and gluing techniques in point-free topology, which highlight how properties of $\mathbb{Q}$ determine properties of real exponentiation.

		This work is motivated by our broader research programme of developing a version of adelic geometry via topos theory. In particular, we wish to construct the classifying topos of places of $\mathbb{Q}$, which will provide a geometric perspective into the subtle relationship between $\mathbb{R}$ and $\mathbb{Q}_p$, a question of longstanding number-theoretic interest.
	\end{abstract}
	\maketitle

	\section*{Introduction}

	Our primary objective here is self-explanatory: we wish to develop an account of exponential and logarithmic functions
	\begin{align*}
	\exp\colon (0, \infty)\times \mathbb{R} &\longrightarrow \!(0,\infty)\\
	(\,x\,,\, \zeta\,) &\longmapsto x^\zeta
	\end{align*}
	\begin{align*}
	\log\colon (1,\infty)&\!\times\! (0, \infty) \longrightarrow\mathbb{R}\\
	(\,b\,&, \, y\,) \longmapsto \log_b(y)
	\end{align*}
	that is \emph{geometric} --- that is to say, it is valid over any topos, and moreover is preserved by pullback along geometric morphisms. In the final section (Section~\ref{sec:ExpNT}),
	we give examples of intended applications where the geometricity might be exploited,
	including working towards a topos-theoretic account of places of $\mathbb{Q}$, which will give a geometric perspective into the subtle relationship between $\mathbb{R}$ and $\mathbb{Q}_p$, a question of longstanding number-theoretic interest.

	As is well known, geometricity cannot be achieved using point-set topology, and instead a point-free approach must be taken --- though in our geometric methodology the points will still play a major role.
	Indeed, this philosophy will guide our construction of the exponentiation and logarithm maps on the Dedekind reals, as well as our development of some of their obvious algebraic properties. As far as the authors are aware, this is the first time these maps have been defined in a point-free setting.

	The heuristic behind our construction is simple, and involves building up to real exponentiation in increasing levels of (topological) complexity to get to the general case:\footnote{Convention: In this paper, we shall denote $Q$ to be the set of non-negative rationals and $Q_+$ to be the set of positive rationals. See Section 2 for more details.}
	\begin{itemize}
		\item[] \textbf{Step 1:} Define natural number exponentiation for non-negative rationals:    $x^a$ for $x\in Q$ and $a\in\mathbb{N}$.
		\item[] \textbf{Step 2:} Define natural number exponentiation for non-negative reals: $x^{a}$ for $x\in \ropen{0, \infty}$ and $a\in\mathbb{N}$.
		\item [] \textbf{Step 3:} Define rational exponentiation for non-negative reals: $x^{q}$ for $x\in \ropen{0, \infty}$ and $q\in \mathbb{Q}$.
		\item[] \textbf{Step 4:} Define real exponentiation for positive reals:    $x^{\zeta}$ for $x\in (0,\infty)$ and $\zeta\in \mathbb{R}$.
	\end{itemize}

    \smallskip\noindent
	However, Step 4 presents several geometric issues. For one, working with real exponents creates continuity issues at $x=0$, forcing us to work with positive Dedekind base. Additionally, exponentiation can either be monotone or antitone, depending on whether $x>1$ or $x<1$. These two different cases require individual treatment, which gives rise to a piecewise account of exponentiation, raising further continuity issues. In light of this, we develop some new lifting and gluing techniques for localic spaces (see Lemma~\ref{lem:rationalsliftreals} and Section~\ref{subsec:gluingtechnique}) which allow us to glue these different cases of exponentiation together to obtain a continuous map.

	\paragraph{Outline of Paper} The preliminaries of point-free mathematics and the geometric theory of Dedekind reals are laid out in Section~\ref{sec:PrelimExp}. In Section~\ref{sec:ExpI}, we tackle the basic case of defining rational exponentiation for positive reals, along with many of the familiar algebraic properties of exponentiation. We extend these results in Section~\ref{sec:ExpII} to obtain the general account of real exponentiation for positive reals. As an immediate application of these properties, we also obtain a point-free account of logarithms in Section~\ref{sec:log}. Finally in Section~\ref{sec:ExpNT}, we sketch out a couple other possible point-free approaches to exponentiation, before discussing the relevance of a point-free construction of real exponentiation to constructing the classifying topos of places, and (more broadly) to Arithmetic Geometry and Number Theory.

	\section{Preliminaries in Point-free Topology and Topos Theory}\label{sec:PrelimExp}
The goal of this section is to give a quick introduction to the basic ideas and constructions used in this paper, as well as fixing some important notation and terminology. We shall also introduce two important techniques that we will rely heavily on in this paper --- the first is a Lifting Lemma that allows us to lift certain results from the rationals to the reals, the second is a gluing technique that will allow us to (constructively) justify performing certain kinds of case-splitting of the reals.


	\subsection{Point-Free Topology and Geometric Logic}%
	\label{subsec:PRELIMPointfree}
	The defining feature of a point-free space is that its points are not described as elements of a set (as in a \emph{point-set} space) but as models of a geometric theory.
	This general principle covers the various interpretations of ``point-free'', including locales (using the geometric theory of completely prime filters), and formal topology (which presents the geometric theory directly, with the base as signature and covers as axioms).

	\begin{conv}[Point-free vs. Point-set Topology]\leavevmode
		\begin{itemize}
			\item In this paper, topology and spaces will always be point-free --- unless we explicitly specify that they are ``point-set''.
			\item If $\thT$ denotes a geometric theory, then we write $[\thT]$ for the space of points of $\thT$. 
		\end{itemize}
	\end{conv}

    \noindent
	We emphasise that working ``point-free'' does \emph{not} mean working point\emph{less}ly, i.e.\ without mentioning points at all. In fact we can define both maps and bundles in terms of points, as follows:

	\begin{defi}[Geometricity]\label{def:MapBundle}\leavevmode
		\begin{itemize}
			\item
			A map $f\colon [\thT]\to [\thT']$ is defined by a \emph{geometric} construction of points $f(x)$ of $[\thT']$ out of points $x$ of $[\thT]$.
			\item
			A bundle $p\colon (\Sigma_{x\in [\thT]}[\thU(x)])\to [\thT]$
			is defined by a \emph{geometric} construction of geometric theories $\thU(x)$
			(the \emph{fibre} over $x$) out of arbitrary points $x$ of $[\thT]$.
		\end{itemize}
		Here the bundle space corresponds to a theory that extends $\thT$ with the ingredients of $\thU(x)$: a point is a pair $(x,y)$ where $x$ is a point of $[\thT]$ and $y$ is a point of $[\thU(x)]$. As a map, $p$ acts by model reduction --- it forgets $y$.

	\end{defi}

	The emphasis here is on \emph{geometric:} this refers (categorically) to a mathematics of  colimits and finite limits --- which usefully includes free algebra constructions. The key insight, which goes back to Grothendieck, is that the geometric mathematics has an intrinsic continuity that includes (for instance in a map $f$) the continuity as defined in point-set topology. In fact it far extends it, going to both generalised spaces (toposes) and a fibrewise topology of bundles (as hinted at above).

	For the purposes of this paper, it is not necessary for us to provide the full technical details of this picture. Instead, it suffices for us to introduce the working definitions of the relevant notions. For a more detailed treatment of point-free topology and topos theory, we recommend~\cite{Vi3,Vi4,Vi5,VickersPtfreePtwise} for justification of some of the deeper aspects.

	Let us now describe geometric theories, first in a strict format.
	Note the two-level distinction between formulae and sequents, which prevents implication ($\Rightarrow$) and universal quantification ($\forall$) from being nested.

	\begin{defi}\label{def:geomlogic} Let $\Sigma$ be a first-order signature of sorts and symbols. Then over $\Sigma$, we define the following:
		\begin{itemize}
			\item
			A \emph{geometric formula} is a logical formula built up from the symbols in $\Sigma$ and a context of finitely many free variables,
			using truth $\top$, equality $=$, finite conjunctions $\land$, arbitrary (possibly infinite) disjunctions $\bigvee$, and $\exists$.
			\item
			A \emph{geometric sequent} is an expression of the form
			$\forall xyz\ldots(\phi\Rightarrow \psi)$,
			where $\phi$ and $\psi$ are geometric formulae in the same context $\{x,y,z,\ldots\}$.
			\item
			A \emph{geometric theory} over $\Sigma$ is a set $\mathbb{T}$ of geometric sequents, the \emph{axioms} of the theory.
		\end{itemize}
	\end{defi}

    \noindent
	One can see a connection with topology in the match between the disjunctions and finite conjunctions in the logic, and the unions and finite intersections of open sets in topology. However, the real importance lies in the fact that the logic can be expressed (categorically) within the geometric \emph{mathematics} of colimits and finite limits.

	For geometric theories we don't essentially extend their expressive power if we allow the signature to include sorts (and their structure) that are constructed geometrically out of others.
	This is extremely useful in practice, and we shall see it in the theory of real numbers (Definition~\ref{def:DedekindReals}),
	where one of the sorts is the rationals $\mathbb{Q}$.
	The technique is described informally in~\cite{Vi4} and with more detailed justification in~\cite{Vi5}.

	\begin{defi}\label{def:model} Let $\mathbb{T}$ be a geometric theory. If $M$ (which may be internal to some category) is a mathematical structure for $\Sigma$ that satisfies the axioms of $\mathbb{T}$, we call $M$ a  \emph{model of $\mathbb{T}$}. The possible universes inside which models of $\mathbb{T}$ may live are \emph{toposes}.
	\end{defi}

	\begin{conv}\label{rem:topos!} For this paper, the unqualified term ``topos'' will always means Grothendieck topos, and $\mathfrak{Top}$ is the 2-category of Grothendieck toposes.
		While a Grothendieck topos can be characterized as a category of sheaves over a Grothendieck site $(\mathcal{C},J)$, we shall not need that description.
		Our geometric methods require us to look at the \emph{points} of a topos
		(the models of a theory it classifies) rather than its \emph{sheaves}\footnote{In fact, the site is just a form of geometric theory (of continuous flat functors) for which there is a convenient concrete description of the sheaves.}. Our work remains valid relative to any elementary topos $\mathcal{S}$ with natural number object (nno), in which case $\mathfrak{Top}$ means the 2-category $\mathfrak{BTop}/\mathcal{S}$ of bounded $\mathcal{S}$-toposes.
	\end{conv}

	\begin{exa} The usual algebraic laws of commutative rings can be formulated as geometric axioms, giving us a geometric theory. For instance, once we include $+,\cdot$ as our logical symbols in $\Sigma$, we may express the  distributivity law of rings as:
		\[
		\forall xyz (\top \Rightarrow x\cdot (y+z)=(x\cdot y) + (x\cdot z)).
		\]
		Examples of models of this theory would be commutative rings --- e.g.\ the rationals $\mathbb{Q}$, the finite prime fields $\mathbb{F}_p$, etc.
	\end{exa}

	\begin{defi}\label{def:proptheory}
		Let $\mathbb{T}$ be a geometric theory. If its signature has no sorts [so there can be no variables or terms, nor existential quantification],
		then we call $\mathbb{T}$ a \emph{propositional theory}.
	\end{defi}

	That is the definition for geometric theories in the strict form of Definition~\ref{def:geomlogic}.
	However, when we allow constructed sorts, then we can also say that a theory is \emph{essentially propositional} if all its sorts can be constructed `out of nothing' --- a prime example being the geometric theory of Dedekind reals (Definition~\ref{def:DedekindReals}).

	In the case of a propositional theory, there is a close connection with point-set topology:
	the propositions supply a point-set topology on the set $\pt[\thT]$ of models in $\Set$.
	The connection is well worked out in locale theory~\cite{StoneSp}, and we shall call a space \emph{localic} if it comes from an essentially propositional theory.
	Most of the spaces appearing in this paper are localic.

	In this sense, the localic spaces are ``ungeneralised''.
	Though the point-free approach is mathematically different, and not exactly equivalent, its objects of study are conceptually the same topological spaces as were originally studied point-set\footnote{A remark: close though the connection is, the construction $\pt$ is not geometric. As soon as one uses it, one loses the continuity inherent in geometric reasoning.}.

	For a general geometric theory, however, this cannot be done --- the opens (propositions) are not enough.
	Thus they are a completely new kind of space, without precedent in point-set topology.
	Using the language of Grothendieck, these are \emph{generalised} spaces.

	Referring back to Definition~\ref{def:MapBundle}, it is already notable for maps that the geometric reasoning dispenses with the need for continuity proofs.
	For bundles we begin to see a phenomenon that is scarcely definable in point-set topology:
	a bundle is a continuously indexed family of spaces.
	An important fact is that geometric constructions on bundles work fibrewise.

	\begin{propC}[{\cite[\S 8 - 9]{VickersPtfreePtwise}}]\label{prop:fibres}
		Let $p'\colon (\Sigma_{x'\in [\thT']}[\thU(x')])
		\to[\thT']$
		be a bundle, and $f\colon [\thT]\to[\thT']$ a map.
		Then the following diagram of spaces is a pullback.
		The top map takes $(x,y)$ to $(f(x),y)$.
		\[
		\begin{tikzcd}
		\Sigma_{x\in [\thT]}[\thU(f(x))]
		\ar[r]
		\ar[d,"p"]
		& \Sigma_{x'\in [\thT']}[\thU(x')]
		\ar[d, "p'"]
		\\
		{[\thT]}
		\ar[r, "f"]
		& {[\thT']}
		\end{tikzcd}
		\]
	\end{propC}

	If $\thT'$ is the empty theory (= no sorts or symbols, no axioms) then $[\thT']$ is $\textbf{1}$, the 1-point space, and $\thU$ is a plain theory (= no $x'$ to depend on) and the pullback is $[\thT]\times[\thU]$.
	This has an interesting methodological consequence:

	\begin{conv}[``Fixing $x$'']\label{conv:fixingx}  Suppose we wish to construct a map with multiple arguments, such as
		\[
		f\colon[\thT]\times [\thU]\rightarrow [\thU'].
		\]
		To do this, we shall often say ``fix $x\in [\thT]$'' and then, by the usual process, construct a map
		\[
		f_{x}\colon[\thU]\rightarrow [\thU']
		\text{.}
		\]
		The declaration ``fix $x\in [\thT]$'' means that we are working over $[\thT]$ (technically, in the topos of sheaves $\baseS[\thT]$), so that $[\thU]$ and $[\thU']$ are transported to their products with $[\thT]$, so we are actually defining a commutative triangle as follows ---
		but that is equivalent to the $f$ we wanted.
		\[
		\begin{tikzcd}
		{[\thT]\times[\thU]}
		\ar[rr, "{\langle p, f\rangle}"]
		\ar[dr, swap, "p"]
		&& {[\thT]\times[\thU']}
		\ar[dl, "p"]
		\\
		& {[\thT]}
		\end{tikzcd}
		\]
		The reader may notice that we are doing is reminiscent of dependent type theory, except cast in a topos-theoretic language.
	\end{conv}

	\subsection{Brief explanation of the topos theory}%
	\label{subsec:toposes}
	So far, we have not tried to explain what $[\thT]$,
	the space of models of $\thT$, actually is mathematically.
	This is not trivial, as it is supposed to be the space of \emph{all} models, in all suitable categories --- let us say Grothendieck toposes.

	The solution, a fundamental part of topos theory, is to form a category $\baseS[\thT]$, the \emph{classifying topos},
	which is ``the geometric mathematics freely generated by the generic (or universal) model $U_{\thT}$ of $\thT$''. To elaborate:
	\begin{itemize}
		\item ``A category $\baseS[\thT]$'': The objects of $\baseS[\thT]$ are the \emph{sheaves} over $[\thT]$.
		\item ``Generic'' means that it has no properties other than being a model of $\thT$, so this is really a syntactic construction. Such a generic model exists for any geometric theory.
		\item
		``Freely generated'' means that for any model $M$ in some other topos $\baseS[\thT']$, there is a functor $\baseS[\thT]\to\baseS[\thT']$, unique up to isomorphism, that takes $U_{\thT}$ to $M$ and preserves colimits and finite limits (and hence all geometric constructions).
	\end{itemize}

	\begin{obs}[Conservativity of the generic model]\label{obs:conserv}
    This definition means that any geometric construction applied to the generic point $U_{\thT}$ can be transported along the relevant functor taking $U_{\thT}$ to any other model $M$. In particular, for any geometric sequent $\sigma$, the sequent $\sigma$ holds for $U_\mathbb{T}$ iff it holds for any $\mathbb{T}$-model in any topos.
	\end{obs}

	Let's unpack this in the context of point-free topology. Consider a \emph{map}
	$f\colon[\thT]\to[\thT']$, as described in Definition~\ref{def:MapBundle}.
	To define it, we declare ``let $x$ be a point of $[\thT]$'', and then work geometrically to construct a point $f(x)$ of $[\thT']$.
	In the particular case where $x$ is the generic point $U_{\thT}$, we get a geometric construction of $f(U_{\thT})$ in $\baseS[\thT]$.
	Following Observation~\ref{obs:conserv}, by transport along those functors, that generic construction suffices to describe all the instances for more specific points.
	We thus see that the generic point $U_{\thT}$ plays the role of \emph{formal parameter} $x$ in the definition of $f(x)$, and actual parameters are substituted by transporting constructions along the functors.

	In short, a map $f\colon[\thT]\to[\thT']$ defines a point $f(U_{\thT})$ of $[\thT']$,
	constructed geometrically in $\baseS[\thT]$.
	But that is in turn equivalent to a functor $f^\ast\colon \baseS[\thT']\to\baseS[\thT]$ --- note the reversal of direction --- that preserves colimits and finite limits, and takes $U_{\thT'}$ to $f(U_{\thT})$.
	From preservation of colimits we can get a right adjoint $f_\ast\colon\baseS[\thT]\to\baseS[\thT']$,
	and we have arrived at the usual definition of geometric morphism.
	Our maps are thus geometric morphisms.

	The traditional view in topos theory is that the category $\baseS[\thT]$ ``is'' the generalised space of models of $\thT$. However, we shall distinguish them notationally to make it clear what morphisms are being used,
	as above with $f$, $f^\ast$ and $f_\ast$.

	For propositional theories there is a junior version of these ideas using the language of posets, which will be helpful for our calculations.
	Instead of looking at all geometric constructions on sets, it just considers those on truth values, i.e.\ subsets of 1.

	\begin{defi}\label{def:frame}
		A \emph{frame} is a complete lattice $A$ possessing all small joins $\bigvee$ and all finite meets $\land$, such that the following distributivity law holds
		\[a\land \bigvee S = \bigvee \{a\land b | b\in S\}\]
		where $a\in A, S\subseteq A$.

		A \emph{frame homomorphism} is a function between frames that preserves arbitrary joins and finite meets. Frames and frame homomorphisms form the category \textbf{Frm}.
	\end{defi}

	For a theory $\thT$ we have a frame $\Omega[\thT]$ of \emph{opens} of the associated space [more precisely, $\Omega[\thT]$ is the Lindenbaum algebra of $\thT$, i.e.\ the set of geometric formulae modulo equivalence provable from $\thT$].
	It is the ``propositional geometric logic freely generated by the generic model of $\thT$'' in a way similar to what we said for $\baseS[\thT]$.

	\begin{defi}\label{def:localespace} We define the category \textbf{Spaces} to be the dual of \textbf{Frm},
		i.e.\ \textbf{Spaces} = $\textbf{Frm}^{\opp}$. We shall refer to the objects in \textbf{Spaces} as \emph{localic spaces}, or as \emph{(ungeneralised) spaces}.
	\end{defi}

	Our choice of terminology in Definition~\ref{def:localespace} is justified by the following proposition (for more details, see~\cite{J2,Vi4}):

	\begin{prop}
		Let $\mathcal{E}_L$ be the category of sheaves constructed over the localic space $L$.
		Then $\mathcal{E}_L$ is a topos whose points belong to the
		propositional theory $\mathbb{T}$ of completely prime filters of the associated frame. Conversely, if $\mathbb{T}$ is a propositional theory, then $\baseS[\mathbb{T}]\simeq \mathcal{E}_{\Omega[\mathbb{T}]}$, where $\mathcal{E}_{\Omega[\mathbb{T}]}$ is the category of sheaves constructed over the frame of opens $\Omega[\mathbb{T}]$. Furthermore, any geometric morphism between $\baseS[\mathbb{T}]\rightarrow\baseS[\mathbb{T}']$ corresponds to a localic morphism in \textbf{Spaces}. \end{prop}

	\subsection{The Geometric Theory of Reals}\label{subsec:PRELIMDedekindReals} There are two important different types of reals in point-free topology:  Dedekind reals and the so-called `one-sided reals'. Much of the technical analysis of this paper involves exploiting the relationship between these two notions of reals, as well as their specific characteristics.

	We start with definitions. Denote by $\mathbb{Q}$ the set of rationals, by $Q_+$ the positive rationals, and by $Q$ the non-negative rationals. We denote by $\mathbb{R}$ the space of points of the geometric theory of Dedekind reals, which we explicitly define in the following:

	\begin{defi}\label{def:DedekindReals} The geometric theory of Dedekind Reals, with space $\mathbb{R}$, comprises two relations $L,R\subset \mathbb{Q}$ which satisfy the following axioms:
		\begin{enumerate}
			\item $\exists r\in\mathbb{Q}. R (r)$ \qquad\qquad\qquad\qquad\qquad\qquad\qquad\,\, (Right Inhabitedness)
			\item $\forall r\in\mathbb{Q}.\big(R(r)\Leftrightarrow \exists r'\in\mathbb{Q}.(r'<r\land R(r'))\big)$ \qquad\!\!\! ($\Leftarrow$ Upward closure; $\Rightarrow$ Roundedness)
			\item $\exists q\in \mathbb{Q}. L (q)$ \qquad\qquad\qquad\qquad\qquad\qquad\qquad\,\, (Left Inhabitedness)
			\item $\forall q\in\mathbb{Q}.\big(L(q)\Leftrightarrow \exists q'\in\mathbb{Q}.(q'>q\land L(q'))\big)$ \qquad ($\Leftarrow$ Downward closure; $\Rightarrow$ Roundedness)
			\item $\forall q,r\in\mathbb{Q}.(L(q)\land R(r)\Rightarrow q<r)$ \qquad\qquad\qquad (Separatedness)
			\item $\forall q,r\in\mathbb{Q}.(q<r\Rightarrow L(q)\lor R(r))$ \qquad\qquad\qquad (Locatedness)
		\end{enumerate}
		The two relations $L,R$ correspond to the left and right Dedekind sections of a real number.
	\end{defi}

	\begin{conv}\label{conv:simplification1} We shall often denote a point of $\mathbb{R}$ as $x$, instead of explicitly writing out the pair of relations representing it: $(L_x,R_x)$. We will also use $q < x$ to mean $L_x(q)$ and use $x < r$ to mean $R_x(r)$.
	\end{conv}

	\begin{rem}%
		\label{rem:locAxiom}It is known (e.g.\ see~\cite{MV}) that Axiom (6) is equivalent to the following axiom:
		\[\forall \epsilon\in \mathbb{Q}.\big(\epsilon>0\Rightarrow \exists q,r\in\mathbb{Q}.( L(q)\land R(r)\land r-q<\epsilon)\big).\]
	\end{rem}

	The reader will have observed that we used the rationals $\mathbb{Q}$ as our sort for the theory of $\mathbb{R}$. As mentioned before, $\mathbb{Q}$ can be constructed geometrically `out of nothing' along with the usual arithmetic structure, including a decidable strict order $<$ on $\mathbb{Q}$. Not only does this mean that the theory of Dedekind reals is an essentially propositional theory (and thus $\mathbb{R}$ is a localic space), the strict order $<$ on $\mathbb{Q}$  can also be leveraged to give us various (constructive) ways of comparing two points with each other, be they in $\mathbb{Q}$ or $\mathbb{R}$:

	\begin{fact}[Archimedean Property]\label{fact:archproperty} Given two non-negative rationals $x,y\in Q$, where $x>0$ there exists at least one natural number $N\in\mathbb{N}$ such that $Nx>y$, or equivalently $x>\frac{y}{N}$.
	\end{fact}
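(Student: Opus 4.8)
The plan is to clear denominators and reduce the claim to an elementary statement about natural numbers, exploiting the fact that $\mathbb{Q}$ --- together with its order and arithmetic --- is built geometrically out of $\mathbb{N}$ (Subsection~\ref{subsec:PRELIMDedekindReals}), so that the whole argument takes place inside the decidable algebra of $\mathbb{N}$ and needs no case analysis beyond decidable ones.

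First I would fix representations $x = a/b$ and $y = c/d$ with $a,b,c,d\in\mathbb{N}$ and $b,d\geq 1$; since $x>0$ we may moreover take $a\geq 1$. Multiplying the target inequality $Nx>y$ through by $bd>0$ turns it into the equivalent inequality $N\cdot ad > bc$, now a statement purely about natural numbers.

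Next I would simply exhibit a witness. Since $a\geq 1$ and $d\geq 1$ we have $ad\geq 1$, hence $N\cdot ad \geq N$ for every $N$; taking $N := bc+1$ gives $N\cdot ad \geq bc+1 > bc$, which is exactly what was wanted (this also covers the degenerate case $y=0$, i.e.\ $c=0$, where $N=1$ works). The reformulation ``$x > y/N$'' then follows immediately by dividing the inequality $Nx>y$ by $N>0$.

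There is no real obstacle here: the only point to watch is that each manipulation --- extracting a fractional representation, comparing and multiplying natural numbers, dividing by a positive rational --- is legitimately geometric, and this is unproblematic because $\mathbb{N}$ and $\mathbb{Q}$ are decidable objects constructed ``out of nothing'', so no excluded middle beyond decidable instances and no choice is invoked. Consequently the argument is constructive and is preserved under transport along geometric morphisms, making the Archimedean property as routine point-free as it is classically.
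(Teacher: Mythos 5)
Your proof is correct: clearing denominators to reduce the claim to $N\cdot ad > bc$ in $\mathbb{N}$ and taking $N=bc+1$ is the standard constructive argument, and your remarks about decidability and geometricity are exactly the right justification in this setting. The paper itself states the Archimedean property as a Fact without proof, so there is no proof to diverge from; your argument supplies precisely what is implicitly assumed.
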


	\begin{defi}\label{def:apartness}
		Given two Dedekind reals $x$ and $y$, we denote:
		\begin{itemize}
			\item  $x<y$ if there exists some rational number $q$ such that $x<q<y$. In particular, this defines a strict order on $\mathbb{R}$, and the space of all pairs of Dedekinds satisfying
			$x<y$ defines an open subspace of $\mathbb{R}\times\mathbb{R}$.
			\item $x\geq y$ if $(x,y)$ belongs to the closed complement of  $<$ in $\mathbb{R}\times\mathbb{R}$.
		\end{itemize}
	\end{defi}

	\begin{rem}[Equality of Dedekinds]\label{rem:equalDedekind} Syntactically, two models of a propositional theory are isomorphic if they satisfy exactly the same propositions. Consequently,
		in the case of $\mathbb{R}$, this means that $x=y$ if the following condition holds: for any $q\in\mathbb{Q}$, we have that $q<x$ iff $q<y$ and $x<q$ iff $y<q$.
	\end{rem}

	\begin{discussion}[Topology and Decidability]\label{disc:undecidableorder} Definition~\ref{def:apartness} says that there is a sense in which the strict order $<$ relation on  $\mathbb{Q}$ lifts to yield another relation on $\mathbb{R}$. However,
		there is an issue of decidability here, i.e.\ of whether an open has an \emph{open} Boolean complement (so the open is clopen). In particular,
		$<$ is decidable on the rationals (where, because the rationals are discrete, open subspaces are just subsets), but not on the reals.
	\end{discussion}

	There are also two main classes of spaces/geometric theories closely related to $\mathbb{R}$ that will be of interest to us in this paper. The first important class are the subspaces of $\mathbb{R}$:

	\begin{defi} A \emph{subspace} $[\mathbb{T}']$ of another space $[\mathbb{T}]$ is a space that can be presented with a theory $\mathbb{T}'$ being $\mathbb{T}$ extended with additional axioms. In particular, we define the following subspaces of $\mathbb{R}$:
		\begin{enumerate}[label={(\roman*)}]
			\item Denote $(0,\infty)$ to be the open subspace of positive Dedekinds: this is the subspace of $\mathbb{R}$ satisfying the axiom `$\top\Rightarrow L(0)$'.
			Likewise for $q$ rational, denote $(q,\infty)$ and $(-\infty, q)$  for the subspaces satisfying $\top\Rightarrow L(q)$ and $\top\Rightarrow R(q)$ respectively.
			\item Denote $\ropen{0, \infty}$ to be the closed complement of $(-\infty, 0)$,
			satisfying $R(0)\Rightarrow \bot$.
			Using the axioms of $\mathbb{R}$ we see that this is equivalent to $\forall q\in\mathbb{Q}.\, \left(q<0\Rightarrow L(q)\right)$.
			Similarly we write $\ropen{q,\infty}$ and $\lopen{-\infty, q}$.


			\item Finally,
			we extend the notation an obvious way. For example, $\lopen{0,1}=\lopen{-\infty,1}\wedge(0,\infty)$ has the axioms of both $\lopen{-\infty,1}$ and $(0,\infty)$.
		\end{enumerate}
	\end{defi}

    \noindent
	The second important class of spaces related to $\mathbb{R}$ are the one-sided reals:

	\begin{defi}\label{def:1sidedreals} Recall the axioms defining the Dedekind reals in Definition~\ref{def:DedekindReals}. Then:
		\begin{enumerate}[label={(\roman*)}]
			\item The \emph{upper reals} is a space whose points just satisfy Axiom (2).
			\item The \emph{lower reals} is a space whose points just satisfy Axiom (4).
		\end{enumerate}
		Note that this allows the upper (resp.\ lower) reals to be empty, which correspond to $\infty$ (resp.\ $-\infty$). We can exclude these cases by using Axiom (1) (resp.\ Axiom (3)). Informally, an inhabited upper real (resp.\ lower real) approximates a number from above (resp.\ below), whereas a Dedekind real approximates the number from both directions.
	\end{defi}

	\begin{conv}\label{conv:simplification2}Extending Convention~\ref{conv:simplification1}, given an upper real $x$ (resp.\ a lower real $x$), we often write $x<q$ (resp.\ $q<x$) to mean that $q$ belongs to the subset of rationals constituting $x$. We shall also often refer to the one-sided reals as just the `one-sideds'.
	\end{conv}

	\begin{conv}\label{conv:specorder} The one-sided reals are spaces with the corresponding Scott topologies: for lower reals, $x\sqsubseteq y$ iff $x\leq y$ whereas for upper reals $x\sqsubseteq y$ iff $x\geq y$. Observe that the specialisation order $\sqsubseteq$ for the lower reals agrees with the numerical order, whereas for the upper reals it is the opposite. To reflect this, we shall use arrows on top of the spaces to show the direction of the refinement under their respective specialisation orders. For instance, consider the space $(0,\infty)$ --- we then denote the corresponding space of lower reals as $\overrightarrow{\lopen{0,\infty}}$ and the corresponding space of upper reals as $\overleftarrow{\ropen{0, \infty}}$. In particular, note that we've had to include $\infty$ (resp.\ $0$) in the relevant space of lower reals (resp.\ upper reals) since they must contain arbitrary directed joins with respect to  $\sqsubseteq$.
	\end{conv}

	\begin{fact} There exist natural maps
		\begin{align*}
		L\colon \mathbb{R} &\longrightarrow \overrightarrow{\lopen{-\infty,\infty}}\\
		R\colon \mathbb{R} &\longrightarrow \overleftarrow{\ropen{-\infty,\infty}}
		\end{align*}
		where given a Dedekind real $x=(L_x,R_x)$, $L$ sends $x\mapsto L_x$ and $R$ sends $x\mapsto R_x$.
	\end{fact}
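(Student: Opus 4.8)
The plan is to read the statement through the point-free definition of a map (Definition~\ref{def:MapBundle}): to produce $L$ and $R$ it suffices to construct, geometrically in an arbitrary point $x$ of $\mathbb{R}$, a point of the relevant one-sided space. I would therefore fix a Dedekind real $x=(L_x,R_x)$ and check that $L_x$ is a point of $\overrightarrow{(-\infty,\infty]}$ and that $R_x$ is a point of $\overleftarrow{[-\infty,\infty)}$. Since $x\mapsto L_x$ and $x\mapsto R_x$ involve no limit or colimit construction at all --- they are pure model reductions, forgetting part of the structure of $x$ --- geometricity, and hence the existence of $L$ and $R$ as morphisms of spaces, is then immediate.

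For $L$: by Definition~\ref{def:1sidedreals} a point of $\overrightarrow{(-\infty,\infty]}$ is a (possibly empty) subset $L\subseteq\mathbb{Q}$ satisfying just Axiom~(4) of Definition~\ref{def:DedekindReals}. The left part $L_x$ of a Dedekind real satisfies Axiom~(4) because that is literally one of the defining axioms of $\mathbb{R}$; hence $L_x$ is a legitimate point of $\overrightarrow{(-\infty,\infty]}$. Phrased on frames of opens, $L$ corresponds to the frame homomorphism sending the generator ``$q<(-)$'' of the frame of $\overrightarrow{(-\infty,\infty]}$ to the open ``$q<x$'' (i.e.\ the proposition $L_x(q)$) of $\mathbb{R}$; the only presenting relation to verify is roundedness, which is exactly Axiom~(4), and continuity with respect to the Scott topology of Convention~\ref{conv:specorder} is automatic from geometricity. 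The map $R\colon\mathbb{R}\to\overleftarrow{[-\infty,\infty)}$ is entirely symmetric, with Axiom~(2) in place of Axiom~(4) and ``$(-)<q$'' in place of ``$q<(-)$''.

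There is really no obstacle here: the substantive point is only the recognition that the geometric theories of the lower and upper reals are obtained from the theory of Dedekind reals by discarding one of the two relations together with all but one of the axioms, so that $L$ and $R$ are nothing more than the evident projections (which is what ``natural'' means in the statement). If one wishes, the targets can be sharpened --- Axioms~(1), (3) and~(5) force $L_x$ to be inhabited and bounded above, so $L$ factors through the subspace of bounded inhabited lower reals, and dually for $R$ --- but this refinement is not needed for the Fact as stated.
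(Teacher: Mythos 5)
The paper states this as a Fact without proof, and your argument supplies exactly the evident justification it presupposes: $L$ and $R$ are pure model reductions, geometric in the sense of Definition~\ref{def:MapBundle}, with the axioms of the one-sided target theories read off directly from the Dedekind axioms. One small correction: since the codomain $\overrightarrow{(-\infty,\infty]}$ excludes $-\infty$ (the empty lower real), inhabitedness via Axiom~(3) is part of what the stated codomain requires rather than an optional sharpening (and dually Axiom~(1) for $R$ landing in $\overleftarrow{[-\infty,\infty)}$) --- but you verify these anyway, so nothing is actually missing.
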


	The one-sided reals occupy a kind of computational sweet spot in our point-free calculations. On the one hand, they correspond more closely to our intuitive notion of a `real number' compared to the mere set of $\mathbb{Q}$. On the other hand, unlike the Dedekinds, the one-sideds can also be viewed as honest \emph{subsets} of $\mathbb{Q}$. The upshot is that there is often a direct sense in which properties can be lifted from the rationals to the one-sideds, so long as they respect the order relation. This is a particularly powerful insight once we realise that a Dedekind real is entirely determined by its left or right sections. Hence, in order to show that certain identities hold on the level of the Dedekinds, it is often enough to show that they hold for either their left or right Dedekind sections, which is typically easier.

	We make these claims precise in the following two lemmas:

	\begin{lem}\label{lem:check1sided} The following are equivalent for Dedekinds $x,y$:
		\begin{enumerate}
			\item $x\leq y$
			\item $L_x\sqsubseteq L_y$
			\item $R_x\sqsupseteq R_y$
		\end{enumerate}
		\begin{proof} $(1)\Rightarrow (2)$:  Suppose $q<L_x$. By roundedness, we can find $q'\in\mathbb{Q}$ such that $q<q'<L_x$. By locatedness, either $q<L_y$ or $ R_y<q'$. If $q<L_y$, then done; else if $R_y<q'$, then $y<x$, contradicting (1).

			$(2)\Rightarrow (1)$: Suppose for contradiction that there exists some $q$ such that $y<q<x$. This means there exists $q\in L_x$ and $q\notin L_y$, contradicting (2). Since (2) implies that $y<x$ does not hold, and $x\leq y$ is the closed complement of $y<x$, this implies (1).

			$(1)\Leftrightarrow (3)$: Analogous to above.
		\end{proof}
	\end{lem}

	By symmetry, we obtain the following corollary, which refines Remark~\ref{rem:equalDedekind}:

	\begin{cor}\label{cor:check1sided} The following are equivalent for Dedekinds $x,y$:
		\begin{enumerate}
			\item $x=y$
			\item $L_x=L_y$
			\item $R_x=R_y$.
		\end{enumerate}
	\end{cor}

    \noindent
	Next, we prove the Lifting Lemma.
	This lemma provides conditions under which a map $f$ on rationals can be lifted to the one-sided reals along a canonical map $\phi$, such as:
	\begin{align*}
	\phi\colon \mathbb{Q} &\longrightarrow \overrightarrow{\lopen{-\infty,\infty}}\\
	q &\longmapsto I_q:=\{q'\in \mathbb{Q}| q'<q\}
	\end{align*}
	that sends each rational $q$ to its one-sided representative. Following~\cite{Smyth,Vi8}, we isolate the main features of this set-up in the following definition:

	\begin{defi}\label{def:Rstructure} Let $\phi\colon Y\rightarrow Z$ be a map in \textbf{Spaces}, i.e.\ the category of localic spaces.
		\begin{enumerate}[label={(\roman*)}]
			\item We call $\phi$ a \textit{surjection} if it is an epimorphism in \textbf{Spaces}. These maps are most simply characterised by the inverse image function $\Omega \phi$ being one to one\footnote{Note: in contrast to point-set topology, this does \emph{not} imply that every point in the codomain has a preimage in the domain (as is clear in the previous example).}.
			\item Consider $(Y,<)$ where $Y$ is a set equipped with a dense transitive order\footnote{Note: we do not require the order to be strict. For more details, see~\cite{Vi8}.} $<$. An \textit{ideal} in $Y$ is a subset $I\subseteq Y$ that is downward-closed and contains an upper bound for each of its finite subsets. In particular, if the set $I_q:=\{q'\in Y| q'<q\}$ is an ideal for all $q\in Y$, then we call $(Y,<)$ an \textit{$R$-structure}.
			\item The \textit{rounded ideal completion} of an $R$-structure $(Y,<)$ is the space $\RIdl(Y,<)$ of all ideals of $Y$.
			The specialisation order is then the partial order by inclusion.
		\end{enumerate}
	\end{defi}

    \noindent
	We can now state the Lifting Lemma in full:

	\begin{lem}[Lifting Lemma]\label{lem:rationalsliftreals} We work in the category \textbf{Spaces}. Given an $R$-structure $(Y,<)$, there exists a surjection $\phi\colon Y\rightarrow \RIdl(Y,<)$, composition with which induces an equivalence between:
		\begin{itemize}
			\item Maps $\overline{f}\colon \RIdl(Y,<)\rightarrow X$
			\item Maps $f\colon Y\rightarrow X$ satisfying the following two lifting conditions for all $q,q'\in Y$:
			\begin{enumerate}[label={(\roman*)}]
				\item \emph{\textbf{(Monotonicity)}}  $q'<q \implies f(q')\sqsubseteq f(q)$.
				\item \emph{\textbf{(Continuity)}} $f(q)=\displaystyle\dirsup_{q'
					<q}f(q')$.
			\end{enumerate}
		\end{itemize}
		where $X$ is any space and  $\sqsubseteq$ is its specalisation order.

	\end{lem}
	\begin{proof} We proceed in stages.

\paragraph{Step 0: Set-up.} We define $\phi\colon Y\rightarrow \RIdl(Y,<)$ as sending $q\mapsto I_q:=\{q'\in Y|q'<q\}$. Since $(Y,<)$ is an $R$-structure, $I_q$ is an ideal by definition, and so $\phi$ is well-defined. In particular, observe that $I_q=\dirsup_{q'<q}I_{q'}$. Further, using the explicit description of the opens in~\cite{Vi8}, one easily verifies that $\phi$ is indeed an epimorphism.

\paragraph{Step 1: Transforming a given map} Suppose we have a map $f\colon Y\rightarrow X$ satisfying the monotonicity and continuity conditions. Since $I\subseteq Y$ for any $I\in\RIdl(Y,<)$, we thus define a map $\overline{f}\colon \RIdl(Y,<)\rightarrow X$ as sending $I\mapsto\dirsup_{q\in I}f(q)$.

		Conversely, suppose we have a map $\overline{f}\colon \RIdl(Y,<)\rightarrow X$. We then define $f\colon Y\rightarrow X$ as sending $q\mapsto \overline{f}(I_q)$ That this definition of $f$ also satisfies the required properties is a consequence of the fact that $I_q=\dirsup_{q'<q}I_{q'}$. For \textbf{monotonicity}, observe that $I_q=\dirsup_{q'<q}I_{q'}$  means that $q'<q\implies I_{q'}\sqsubseteq I_{q}$. Hence, since $\overline{f}(I_{q'}\sqsubseteq I_{q})$ induces $\overline{f}(I_{q'})\sqsubseteq \overline{f}(I_{q})$ by functoriality, this gives $q'<q\implies f(q')\sqsubseteq f(q)$, as desired.
		\textbf{Continuity} follows immediately from the fact that maps preserve directed joins, and so
		$I_q=\dirsup_{q'<q}I_{q'}$ yields the identity $\overline{f}(I_{q})=\dirsup_{q'<q}\,\overline{f}(I_{q'})$.

\paragraph{Step 2: Proving equivalence} We need to show that the transformation of maps in Step 1 are inverses. In one direction, suppose we have $f\colon Y\rightarrow X$ satisfying the monotonicity and continuity conditions, and define $g(q):=\overline{f}(I_q)$. Recalling that maps preserve directed joins, since $\overline{f}(I_q)=\dirsup_{q'<q}f(q')=f(q)$, it follows that $f=g$. In the other direction, given a map $\overline{f}\colon Z\rightarrow X$, define  $\overline{g}(I):=\dirsup_{q\in I}\,\overline{f}(I_q)$ as well as $f(q):=\overline{f}(I_q)$. By construction, it is clear $\overline{f}\circ \phi=f=\overline{g}\circ \phi$. Since $\phi$ is epi, this yields $\overline{f}\circ\phi=\overline{g}\circ\phi\!\implies\! \overline{f}=\overline{g}$, and we are done.
	\end{proof}

	To illustrate the power of the Lifting Lemma, suppose $Y$ is some space of rationals and $\RIdl(Y,<)$ is some space of one-sided reals, connected via an appropriate surjection $\phi\colon Y\rightarrow \RIdl(Y,<)$. The Lifting Lemma then informally says: in order to prove equality of maps on the one-sided reals, it typically suffices to prove this for the rationals.

	\begin{rem}[Instances of the Lifting Lemma]\label{rem:liftingvariation} There are many spaces whose points are rationals as well as many spaces whose points are one-sided reals --- consider, for instance, the various different subsets/subspaces of $\mathbb{Q}$ and $\overrightarrow{[-\infty,\infty]}$ respectively. A natural question: how do we know \textit{which} spaces of one-sided reals correspond to \textit{which} space of rationals as its rounded ideal completion?

		First, an important subtlety. The order $<$ of an $R$-structure need not be strict --- only transitive and dense. Indeed, in many cases, a space of one-sided reals is obtained as a rounded ideal completion $\RIdl(Y,<)$ where $Y$ is some subset of $\mathbb{Q}$, and $<$ is the same as the standard order $<$ on $\mathbb{Q}$, except perhaps reversed or modified to permit edge cases. For this paper, the most relevant examples are:
		\begin{itemize}
			\item $\overrightarrow{\lopen{0,\infty}}\cong \RIdl(Q_+,<)$ and $\overleftarrow{\ropen{0, \infty}}\cong \RIdl(Q_+,>)$ where $<$ is the standard strict order;
			\item $\overrightarrow{[0,\infty]}\cong\RIdl(Q,<)$, where $Q$ denotes the \textit{non-negative} rationals, and so we modify $<$ to allow the relation $0<0$;
			\item $\overrightarrow{[-\infty,\infty]}\cong \RIdl(\mathbb{Q}\cup\{-\infty\},<)$, where we add an additional $-\infty$ symbol to $\mathbb{Q}$ and modify $<$ to allow $-\infty<-\infty$. Similarly, we have $\overleftarrow{[-\infty,\infty]}\cong \RIdl(\mathbb{Q}\cup\{\infty\},>)$.
		\end{itemize}
		To avoid burdening this paper with too much theory, we will not prove these isomorphisms and shall leave the technical details as a dark grey box, if not a black one. A more thorough discussion of the relevant ideas\footnote{We remark that what we call a `dense order' here is referred to as an `interpolative order' in~\cite{Vi8}.} can be found in~\cite{Smyth,Vi8}.

	\end{rem}

	In the remainder of this subsection, we define some basic arithmetic operations on the reals (Dedekind and one-sided), before collecting some familiar arithmetic facts on how these operations interact. More details (including the proofs, which we have omitted) can be found in~\cite{R}. We start with addition and subtraction:

	\begin{defi} Reals $x$ and $y$ can be added by the rules
		\[q<x+y\Leftrightarrow \exists s<x \land \exists r<y. (q\leq s+r)\]
		\[q>x+y\Leftrightarrow \exists s>x \land \exists r>y. (q\geq s+r)\]
		where $q,s,r\in\mathbb{Q}$.
	\end{defi}

	\begin{defi} Reals $x$ can be negated by the rules
		\[-x<q\Leftrightarrow -q<x\]
		\[-x>q\Leftrightarrow -q>x\]
		where $q\in\mathbb{Q}$. Note that negation reverses orientation: if $x$ is a lower real, then its negation yields an upper real, and vice versa. Nonetheless, if $x$ is a Dedekind real (which comprises both the left and right Dedekind sections), then its negation yields another Dedekind real. As such, given two Dedekind reals $x,y$, we define their subtraction $x-y$ as $x+(-y)$.
	\end{defi}

	We next define multiplication and inverses for non-negative reals. 

	\begin{defi}\label{def:multiplication} Non-negative reals $x,y$ can be multiplied by the rules:
		\[q<x\cdot y \Leftrightarrow \exists s <x \land \exists t<y.( q<s\cdot t ) \]
		\[q>x\cdot y \Leftrightarrow \exists s >x \land \exists t>y.( q>s\cdot t ). \]
		Multiplication of arbitrary reals (not necessarily non-negative) is more involved, and will not be used in this paper. The interested reader is directed to~\cite{R} for more details.
	\end{defi}

	\begin{defi}\label{def:inverses} The inverse of a non-negative one-sided real $x$ is defined as:
		\[q<x^{-1} \Leftrightarrow x<q^{-1}\]
		\[x^{-1}<r \Leftrightarrow r^{-1}<x.\]
		Just as in the case of subtraction, inverting reverses orientation, sending lowers to uppers and vice versa.
		The definition gives that $\infty$ and 0 are inverses.
		One easily checks that (---)$^{-1}$ is an isomorphism, with  ((---)$^{-1})^{-1}$ the identity.
		They combine to give inverses of positive Dedekinds, with $x^{-1}$ the unique positive Dedekind real such that $x\cdot x^{-1}=1$.
	\end{defi}

	\begin{rem} It is an easy exercise to verify that the additive and multiplicative operations defined above turn $\ropen{0, \infty}, \overrightarrow{[0,\infty]}$ and $\overleftarrow{[0,\infty]}$ into semirings, $\mathbb{R}$ into a field, and $(0,\infty)$ a group. As such, they satisfy all the expected arithmetic identities and inequalities --- e.g.\ $x\cdot (y+z) = x\cdot y + x\cdot z$. Further, it is also straightforward to verify that multiplication preserves strict order on positive Dedekind reals (i.e.\ $x<y\implies x\cdot z < y\cdot z$) and non-strict order on the one-sided reals (i.e.\ $x\sqsubseteq y\implies x\cdot z \sqsubseteq y\cdot z$). For more details, see~\cite{R}.
	\end{rem}

	\subsection{Gluing}\label{subsec:gluingtechnique}
	Later (Theorem~\ref{thm:xzetaDed}) we would like to perform a construction for generic $x\in(0,\infty)$, except the construction must be done along a case-splitting for $x\leq 1$ and $x\geq 1$, with agreement at $x=1$.
	In point-set topology, this immediately gives a \emph{function}, after which a continuity proof is needed.
	Working point-free, however, means we must take a little care to maintain geometricity because, reasoning in sheaves over $(0,\infty)$, the conditions $x\leq 1$ and $1\leq x$ are not geometric formulae\footnote{Why are they not geometric? Answer: Because $x\leq 1$ and $1\leq x$ do not give open subspaces of $(0,\infty)$.}.

	Categorically, what we need to prove is that the left-hand diagram below is a pushout square:
	\begin{equation}%
	\label{eq:gluing}
	\begin{tikzcd}
	\{1\}
	\arrow[hook]{r}
	\arrow[hook]{d}
	& {\lopen{0,1}}
	\arrow[hook]{d}
	\\
	{\ropen{1,\infty}}
	\arrow[hook]{r}
	& (0,\infty)
	\end{tikzcd}
	\quad
	\{1\}\xbigtoto[1_L]{1_R}  \lopen{0,1}\amalg \ropen{1,\infty}\xrightarrow{P} (0,\infty)
	\end{equation}
	Equivalently, the right-hand diagram is a coequaliser,
	where $1_L$ and $1_R$ are the global points $1$ in the left and right components,
	and $P$ is the copairing of the two natural inclusions.

	We justify this by applying Vermeulen's work~\cite{Verm} on proper maps.
	There are various equivalent characterizations of ``proper'', showing the connection with the point-set notion, and the most relevant for our purposes is that a map $f\colon Y\rightarrow X$ is proper iff it is fibrewise compact [one proves geometrically that $f^{-1}(x)$ is compact for a generic $x$, and this shows that the corresponding internal space in the topos of sheaves $\baseS X$ is compact]. In particular, the class of proper surjections (cf. Definition~\ref{def:Rstructure}) possesses many nice categorical properties, such as:

	\begin{propC}[{\cite[Propositions 4.2 and 5.4]{Verm}}]\label{prop:propersurj}\leavevmode
		\begin{enumerate}
			\item Proper surjections are coequalisers (of their kernel pair).
			\item Proper surjections are stable under pullback, i.e.\ in a pullback square of spaces

			\[\begin{tikzcd}
			P \ar[r,"k"] \ar[d,swap,"h"] & Y \ar[d,"f"]\\
			Z \ar[r,"g"] & X
			\end{tikzcd}
            \]
			if $f$ is a proper surjection, then so is $h$.
		\end{enumerate}
	\end{propC}

    \noindent
	We now state and prove our gluing principle for $\lopen{0,1}$ and $\ropen{1,\infty}$
	[but the same principle holds for any interval of $\mathbb{R}$ divided at a point].

	\newcommand{\cond}[3]{(#1?\,#2\mathord{:}\,#3)}
	\begin{prop}[Gluing Principle]\label{prop:gluingcoequaliser}
		The right-hand diagram of~\eqref{eq:gluing} is a coequaliser, stable under pullbacks.
	\end{prop}
	\begin{proof}
		Our main step is to show that $P$ is a proper surjection, and hence the stable coequaliser of its kernel pair.
		After that it remains to show that it is the stable coequaliser of the pair in the statement.

		In fact we prove a stronger property of $P$, that it is an \emph{entire} surjection.
		``Entire'' means fibrewise Stone: in other words, each fibre $P^{-1}(x)$ is the spectrum (i.e.\ the space of prime filters) of a Boolean algebra $B_x$ (of clopens).
		We define $B_x$ to be the Boolean algebra presented by one generator $\alpha$ subject to the following relations:
		\[
		B_x=BA \left\langle \alpha\, \left|
		\begin{array}{l}
		\alpha= 1 \quad (\text{if}\, x<1)\\
		\alpha= 0 \quad (\text{if}\, x>1)
		\end{array} \right.
		\right\rangle%
		\text{.}
		\]
		We show that the fibre of $P$ over each $x$ is isomorphic to $\mathsf{Spec}B_x$.

		For a geometric description of points of the coproduct, we write them as $\cond{p}{y}{z}$, an abbreviation of the notation
		$\text{\textbf{if }}p\text{\textbf{ then }}y\text{\textbf{ else }}z$ used in~\cite[2.2.6]{Vi9}.
		Here, $p$ is a decidable proposition, $y$ is a point of $\lopen{0,1}$ defined if $p$ holds, and $z$ a point of $\ropen{1,\infty}$ defined if $\neg p$.
		Then, as described in~\cite{Vi9}, a copairing map $[f,g]$ maps $\cond{p}{y}{z}$ to the directed join  $\dirsup\left(\{f(y)\mid p\}\cup\{g(z)\mid\neg p\}\right)$.

		For $P$, it follows that a point of the fibre $P^{-1}(x)$ must be of the form $\cond{p}{x}{x}$.
		Thus if $x<1$ then $p$ must be true, since $x$ is not defined as point of $\ropen{1,\infty}$;
		and, similarly, if $1<x$ then $p$ must be false:
		\[
		P^{-1}(x)=
		\begin{cases}
		2, \quad\text{if $x = 1$}\\
		1, \quad\text{if $x<1$ or $1<x$}
		\end{cases}
		\]

		To map $P^{-1}(x)$ to $\mathsf{Spec}B_x$,
		we map $\cond{p}{x}{x}\mapsto
		\{1\}\cup\{\alpha\mid p\}\cup\{\neg\alpha\mid\neg p\}$.
		To show that this subset $F$ of $B_x$ is a prime filter, the main issue is that it does not contain $0$.
		To see this, consider if $\alpha=0$.
		Then $x>1$, hence $\neg p$, and $\alpha\neq F$.
		The case $\neg\alpha=0$ is similar.

		For the reverse direction we map $F\mapsto \cond{\alpha\in F}{x}{x}$.
		Note that $\alpha\in F$ is decidable; its complement is $\neg\alpha\in F$.
		If $\alpha\in F$ then $\alpha\neq 0$, so $x\leq 1$ and $x$ is defined as point of $\lopen{0,1}$.
		Similarly, if $\neg\alpha\in F$ then $x$ is defined as point of $\ropen{1,\infty}$.
		It follows that $\cond{\alpha\in F}{x}{x}$ is a point of $P^{-1}(x)$.

		The two maps are mutually inverse, which proves our claim that $P$ is entire. It is surjective because every $B_x$ is non-degenerate (i.e.\ it has $1\neq 0$), essentially because we cannot have both $x < 1$ and $x>1$.

		Now we know that $P$ is the coequaliser of its kernel pair, it remains to show that the kernel pair and the pair $(1_R,1_L)$ have the same coequalisers.
		The kernel pair, the pullback of $P$ against itself, can be calculated as the coproduct of four pairwise pullbacks of the components of
		$\lopen{0,1}\amalg\ropen{1,\infty}$.
		Since both $\lopen{0,1}$ and $\ropen{1,\infty}$ are embedded in $(0,\infty)$, their kernel pairs are just the reflexive parts and are irrelevant to the coequaliser.
		The pullback $\lopen{0,1} \times_{(0,\infty)}\ropen{1,\infty}$,
		the space of pairs $(x,x)$ such that $1\geq x \geq 1$, is just $\{1\}$,
		and the remaining component is just the reverse of that, and implied by symmetry.
		Hence the kernel pair has the same coequaliser as the pair $(1_L,1_R)$.
	\end{proof}

	\begin{cor}\label{cor:pushoutproperty}
		The left-hand diagram in~\eqref{eq:gluing} is a pushout square.
	\end{cor}

	\begin{discussion}[Stability under pullback]
    A reasonable question: why do we ask for the coequaliser to be stable in the Gluing Principle? The short answer: geometricity. To elaborate, saying that a construction is geometric is equivalent to saying that it's stable under pullback along geometric morphisms. The Gluing Principle is meant to provide a geometric justification for the case-splitting along $x\leq 1$ and $x\geq 1$ --- the pushout property gives us a framework for gluing the two cases together, while its stability under pullback tells us that the gluing is geometric.
	\end{discussion}

	\section{Rational Exponents}\label{sec:ExpI} In this section, we develop the consequences of two pairs of basic \emph{exponent laws:}
	\begin{gather}
	x^{\zeta+\zeta'}=x^\zeta x^{\zeta'}\text{, }
	\quad x^0=1%
	\label{eq:BE1}
	\\
	x^{\zeta\cdot \zeta'}=(x^\zeta)^{\zeta'}\text{, }
	\quad x^1 = x%
	\label{eq:BE2}
	\end{gather}
	In describing a map $(x, \zeta) \mapsto x^\zeta$ as an \emph{exponentiation}, we shall mean that it satisfies the above exponent laws. They are enough to prescribe what $x^\zeta$ has to be for $\zeta$ rational. With $a$ a natural number, $x^a$
	must be by repeated multiplication; for $b$ a positive natural number, $x^{\frac{1}{b}}$ must be a radical, and $x^{\frac{a}{b}}$ combines those; and $x^{-\zeta}$ is $(x^\zeta)^{-1}$.
	For completeness, we shall also prove the following \emph{base product} law:
	\begin{gather}
	(xy)^\zeta = x^\zeta y^\zeta\text{, }
	\quad 1^\zeta = 1%
	\label{eq:BE3}
	\end{gather}
	These identities recover the familiar (and standard) algebraic properties of exponentiation. Hereafter, we shall refer to Equations~\eqref{eq:BE1}--\eqref{eq:BE3} collectively as \emph{the Basic Equations}.

	Before proceeding, however, first some obligatory remarks about the appropriate range for the base $x$. Clearly, without complex numbers we cannot hope to deal with radicals of negative reals, so we shall have to assume $x\geq 0$. Further, in later sections, we shall also find two additional problems with the case $x=0$. The first (Section~\ref{subsec:signedratexp}) is for negative exponents and Dedekind base $x$ since $x$ will need to be invertible. The second (Section~\ref{sec:ExpII}) is that while our definition $x^0=1$ is OK for rational exponents (cf. Discussion~\ref{disc:undecidableorder}), this causes continuity issues for real exponents; indeed, this reflects the classical fact that $0^0$ is not well-defined. Nonetheless, so long as we work with just non-negative rational exponents, the assumption that $x\geq 0$ is OK\@.

	\subsection{Natural Number Exponentiation of Discrete Monoids}%
	\label{subsec:NatNumDisc} Let $M$ be a set equipped with a multiplicative monoid structure. Let $x\in M$ be an element of such a set-based multiplicative monoid. By the universal property of $\mathbb{N}$ being the free monoid generated by 1, we obtain a unique monoid homomorphism corresponding to the set-based function sending $1$ to $x$ in $M$. This yields the following map:
	\begin{align*}
	M\times\mathbb{N}&\rightarrow M\\
	(x,a)&\mapsto x^a
	\end{align*}

	\begin{prop}\label{prop:BasicEqns}
		If $M$ is commutative, then exponentiation (as defined above) satisfies the Basic Equations.
	\end{prop}
	\begin{proof} Let $x\in M$, and $a,a'\in\mathbb{N}$.
		Equation~\eqref{eq:BE1}, and the second part of Equation~\eqref{eq:BE2}, come straight from the definition.
		The others are by induction on $a$ or $a'$.
	\end{proof}

	Obviously it will be our aim to show these equations for real exponentiation. As our first step towards this goal, recall that $Q$ denotes the set of non-negative rationals. It is known geometrically that $Q$ is a monoid with respect to multiplication, hence we obtain the following exponentiation map as a special case of the previous construction:
	\begin{align*} Q\times\mathbb{N}&\rightarrow Q\\
	(x,a)&\mapsto x^a
	\end{align*}

	We finish this subsection by establishing some important (and useful!) algebraic properties of this exponentiation map:

	\begin{lem}[Monotonicity]\label{lem:monotonerationals} Denote $\mathbb{N}_+$ to be the set of positive natural numbers.
		If $a\in \mathbb{N}_+$, then the map $\argu^a$ on non-negative rational base preserves and reflects the strict order, and is also unbounded.
		\begin{proof} We prove that  $\argu^a$ preserves the strict order (i.e.\ is strictly monotonic) on $Q$  by induction on $a$. The base case is trivial since $x<y\implies x^1<y^1$ for any $x,y\in Q$ by the basic exponent laws. For the inductive step, suppose that $x<y$ implies that $x^a<y^a$.  Then this yields: $x^{a+1}=x^a\cdot x<x^a\cdot y< y^a\cdot y = y^{a+1}$. To prove that $\argu^a$ reflects the strict order, we have to show that $x^a<y^a\implies x<y$. Decidability of $<$ on $Q$ means that either $x<y$ or $x\geq y$.  Since monotonicity means $x\geq y\implies x^a\geq y^a$, contradicting our hypothesis that $x^a<y^a$, this means that the remaining case $x<y$ must be true.
			Finally, we show that $\argu^a$ is unbounded. Given any $x\in Q$ such that $1<x$, note that monotonicity implies that $1<x^a$. A simple inductive argument easily shows that $x<x^a$. \end{proof}\end{lem}

	\begin{lem}\label{lem:rational sandwich} For any pair of non-negative rationals $q,r\in Q$ such that $q<r$, and any positive natural number $a\in\mathbb{N}_+$, there exists a positive rational $s\in Q_+$ so that $q<s^a<r$.
		\begin{proof} Given $q,r\in Q$ such that $q<r$, denote $\epsilon:=\frac{r-q}{2}$. By Lemma~\ref{lem:monotonerationals}, we know that $\argu^a$ is unbounded on $Q$ --- thus there exists some $v\in Q$ such that $v^a>r$. Consider the function $\argu^a$ on the set of rationals from $0$ to $v$. For any two rationals $x,y$ such that $0\leq x<y\leq v$, we have:
			\begin{align*}
			y^a-x^a&=(y-x)\cdot (y^{a-1}+y^{a-2}\cdot x + \dots + y\cdot x^{a-2}+x^{a-1})\\
			&\leq (y-x)\cdot (v^{a-1}+v^{a-2}\cdot v + \dots  + v\cdot v^{a-2}+v^{a-1})\\
			& = (y-x)\cdot a\cdot v^{a-1}
			\text{.}
			\end{align*}

			\noindent By the Archimedean property (Fact~\ref{fact:archproperty}), there exists some $M\in\mathbb{N}$ such that $\frac{1}{M}<\frac{\epsilon}{a\cdot v^a}$. Denoting $s_i:=\frac{i\cdot v}{M}$, it is clear that:
			\[\displaystyle\bigcup_{0\leq i\leq M}[s_i,s_{i+1}]=[0,v]\]
			and that $s_{i+1}-s_i = \frac{v\cdot (i+1-i)}{M}=\frac{v}{M}<\frac{\epsilon}{a\cdot v^{a-1}}$, and so $s_{i+1}^a-s_i^a < \epsilon$.

			Next, one easily verifies that since:
			\begin{itemize}
				\item 		 $s_0^a=0\leq q$;
				\item $s_M^a=v^a>r>q$; and
				\item  $\{s_i^a\}$ is strictly monotone in $i\in \{0,1.,...,M\}$;
			\end{itemize}
			there thus exists a unique $j$ such that $s_j^a\leq q<s_{j+1}^a$.
			Recalling that $\epsilon:=\frac{r-q}{2}$, this consequently yields:
			\[q<s_{j+1}^a\leq s_{j}^a + \epsilon \leq q +\epsilon <r\]
			proving the lemma, i.e.\  $\exists s\in Q_+$ such that $q<s^a<r$.\end{proof}
	\end{lem}

	\subsection{Natural Number Exponentiation of Non-negative Reals}%
	\label{subsec:NatNumReals}
	Unfortunately, the previous exponentiation map is geometric only for monoid structures on sets, as opposed to topological monoids. Hence, in order to generalise $x^a$ to the case where $x$ is real, adjustments will be needed. The good news is there is a way in which we can lift the exponentiation map from the rational case to the real case, which we make precise in the following proposition:

	\begin{prop}\label{prop:explowerupperreals} The map $\argu^a$ that sends $(x,a)\mapsto x^a$ on rationals extends to two maps on one-sided reals:
		\begin{align*}
		\overrightarrow{[0,\infty]}\times\mathbb{N}&\longrightarrow  \overrightarrow{[0,\infty]}\\
		\overleftarrow{[0,\infty]}\times\mathbb{N}&\longrightarrow  \overleftarrow{[0,\infty]}
		\text{.}
		\end{align*}
		Each map is unique subject to being monoid homomorphism for $n$ and with $x^1=x$. In fact, all the Basic Equations~\eqref{eq:BE1}--\eqref{eq:BE3} hold.
	\end{prop}
	\begin{proof} Let us fix some $a\in\mathbb{N}$. Then, consider the diagram:
		\[\begin{tikzcd} Q  \ar[r, "f_a"] \ar[d,"\phi"] & Q \ar[d,"\phi"]\\
		\overrightarrow{[0,\infty]} \ar[r, dashed, "\overline{f_a}"] & \overrightarrow{[0,\infty]}
		\end{tikzcd}
        \]
		where $f_a\colon Q\rightarrow Q$ is the map that sends $x\mapsto x^a$. We now check that $\phi\circ f_a$ satisfies the two lifting conditions of the Lifting Lemma based on $\overrightarrow{[0,\infty]}\cong\RIdl(Q,<)$, where $<$ is the usual strict order on $Q$ but modified to allow $0<0$ (cf. Remark~\ref{rem:liftingvariation}).

		For monotonicity, suppose $q<q'$ for $q,q'\in Q$. By Lemma~\ref{lem:monotonerationals}, this yields the inequality $q^a\leq (q')^a$, which is preserved by $\phi$, and so $q<q'\!\implies\! \phi\circ f_a(q)\sqsubseteq \phi\circ f_a(q')$. To verify continuity, this amounts to showing that if $r<(q')^a$, then $\exists q<q'$ such that $r<q^a$ (where $r,q,q'\in Q$). For $a=0$, $r<(q')^{0}=1$, then we can let $q=\frac{q'}{2}$ since $r<1=q^{0}$. For $a\geq 1$, we know by Lemma~\ref{lem:rational sandwich} that there exists some $t\in Q$ such that $r<t^a<(q')^a$. Since $\argu^a$ still reflects the (modified) order $<$ on $Q$, we have that $t<q'$ and $r<t^a$, as desired.  With the appropriate hypotheses verified, we apply the Lifting Lemma to obtain the (unique) map $\overline{f}\colon \overrightarrow{[0,\infty]}\rightarrow \overrightarrow{[0,\infty]}$. Viewing this map externally (cf. Convention~\ref{conv:fixingx}), we get an exponentiation map $\overrightarrow{[0,\infty]}\times\mathbb{N}\rightarrow \overrightarrow{[0,\infty]}$ sending $(x,a)\mapsto x^a$.
		The Basic Equations, and also the uniqueness, follow from surjectivity of $\phi$, since we already know that they hold for rational $x$ and $y$.

		A similar argument (using $Q_{+}\cup\{\infty\}$) works for the upper reals, thus defining $x^a$ for some upper real $x\in \overleftarrow{[0,\infty]}$.
	\end{proof}

	More explicitly, Proposition~\ref{prop:explowerupperreals} extends natural number exponentiation $\argu^a$ from the rationals to the one-sided reals, yielding:
	\begin{align*}
	q < x^a &\Leftrightarrow
	q<0\lor \exists q'\in Q.(q'<x\land q<(q')^a) \\
	x^a < r & \Leftrightarrow
	\exists r'\in Q_+.(x<r'\land (r')^a<r).
	\end{align*}
	on $\overrightarrow{[0,\infty]}$ and $\overleftarrow{[0,\infty]}$ respectively. Putting everything together, we now define natural number exponentiation on the Dedekind reals as follows:

	\begin{prop}\label{prop:NatNumReals} Natural number exponentiation on the one-sided reals (as per Proposition~\ref{prop:explowerupperreals}) combine to yield the following map on the Dedekind reals:
		\begin{align*}
		\ropen{0, \infty}\times \mathbb{N}&\rightarrow \ropen{0, \infty}\\
		(x,a)&\mapsto x^{a} = (L_x^a, R_x^a)\text{.}
		\end{align*}
		The Basic Equations~\eqref{eq:BE1}--\eqref{eq:BE3} hold for Dedekind $x$ and $y$.
	\end{prop}
	\begin{proof} It remains to verify the inhabitedness, separatedness and locatedness axioms. Right inhabitedness essentially follows from the unboundedness of $\argu^a$ on $\mathbb{Q}$ (Lemma~\ref{lem:monotonerationals}) whereas we get left inhabitedness (and non-negativity) for free since for any negative rational $q<0$, we get $q<x^{a}$ by construction.

		For separatedness, suppose $q < x^a < r$. Notice that $r\in Q_+$ by construction, hence if $q\leq 0$, then $q<r$ automatically. Suppose instead that $q>0$ and there exist non-negative rationals $s,t$ such that $s<x<t$, whereby $q< s^a$ and $t^a<r$. Since $x$ is a \textit{separated} Dedekind, $s<x<t\implies s<t$. Hence, since $s^a=1=t^a$ if $a=0$, and $s^a<t^a$ if $a>0$ by Lemma~\ref{lem:monotonerationals}, this combines to yield the inequality $q<s^a\leq t^a<r$, proving separatedness.

		For locatedness, suppose we have $q,r\in\mathbb{Q}$ such that $q<r$. If $q<0$, then $q<x^{a}$ automatically, so assume $q\geq 0$ for the remainder of this proof.

		In the case where $a>0$, by Lemma~\ref{lem:rational sandwich} we can find $q',r'\in Q_+$ such that $q<(q')^a<(r')^a<r$. Since $\argu^a$ reflects strict order (Lemma~\ref{lem:monotonerationals}), we get $(q')^a<(r')^a\implies q'<r'$. Further, since $x$ is a \textit{located} Dedekind real, this implies that either $q'<x$ (and thus $q<x^a$) or that $r'>x$ (and thus $r>x^a$). Alternatively, suppose that $a=0$. Then locatedness is obvious, since $x^0=1$ is located. Hence, in either case ($a=0$ or $a>0$), locatedness holds.

		The Basic Equations follow immediately, because we know they hold for the lower and upper parts.
	\end{proof}

	\begin{conv}[``Non-negative Reals'']
    Whenever we state that a result holds for the ``non-negative reals'', we shall mean that it holds for both the non-negative one-sideds (i.e.\ $x\in\overrightarrow{[0,\infty]}$ or $x\in\overleftarrow{[0,\infty]}$) and the non-negative Dedekinds (i.e.\ $x\in\ropen{0, \infty}$) --- which are the cases for which we have defined exponentiation in Propositions~\ref{prop:explowerupperreals} and~\ref{prop:NatNumReals}. Whenever we wish to prove a sharper result that holds just in the case of non-negative Dedekinds, we shall signpost this explicitly.
	\end{conv}

	We end this section by generalising the monotonicity principle of Lemma~\ref{lem:monotonerationals}:

	\begin{lem}[Monotonicity]\label{lem:monotonereals} Let  $a\in\mathbb{N}_+$ be a positive natural number. Then the map $\argu^a$ preserves and reflects non-strict order on the non-negative reals. Further, it is also an unbounded map that preserves and reflects strict order on the non-negative Dedekind reals.
	\end{lem}
	\begin{proof} Consider $\argu^a$ on the lower reals $\overrightarrow{[0,\infty]}$. Preservation of non-strict order follows immediately from the fact that any continuous map preserves specialisation order.
		To show that $\argu^a$ reflects non-strict order, suppose that $x^a\sqsubseteq y^a$. This yields the computation:
		\begin{align*}
		q<x &\implies q^a < x^a\quad\quad \quad \quad\quad \quad\quad \quad \quad\quad  \quad\!\!\!\!\!\text{[by unwinding the definition of $\argu^a$]}\\
		& \implies q^a < y^a \quad\quad \quad \quad\quad \quad\quad \quad \quad\quad  \quad\!\!\!\!\!\text{[since $x^a\sqsubseteq y^a$] }\\
		& \implies \exists q'\in Q.(q'<y\land q^a<(q')^a) \,\quad\text{[by construction/definition of $\argu^a$]}\\
		&\implies q<q' \, \quad\quad \quad \quad\quad \quad\quad \quad \quad\quad  \quad\!\text{[since $\argu^a$ reflects strict order on $Q$]}\\
		&\implies q<y  \quad\quad \quad \quad\quad \quad\quad \quad \quad\quad  \quad \!\!\!\!
		\!\quad\text{[by downward closure of lower reals]}
		\end{align*}

		Next, to show that $\argu^a$ also preserves strict order on the non-negative Dedekinds, suppose $x<y$, and so there exists rationals $q,q'$ such that $x<q<q'<y$. Then since $\argu^a$ preserves non-strict order, and using Lemma~\ref{lem:monotonerationals}, we get $x^a\leq q^a< (q')^{a} \leq y^a$ and hence $x^a<y^a$.

		On the other hand, suppose $x^a<y^a$. Since $0\leq x^a$, we may apply Lemma~\ref{lem:rational sandwich}, to obtain positive rationals $s,s'\in Q_+$ such that $x^a<s^a<(s')^a<y^a$. Since $s<s'$ iff $s^a<(s')^a$, and since $\argu^a$ reflects non-strict order, we get  $x\leq s < s' \leq y$, which shows that $\argu^a$ also reflects strict order on the non-negative Dedekinds.

		Unboundedness follows directly from the rational case, Lemma~\ref{lem:monotonerationals}.
	\end{proof}

	\subsection{Radicals of Non-Negative Reals}%
	\label{subsec:RootsReals} Next, given some $b\in\mathbb{N}_+$, we would like to define the $b$th-root of a non-negative real. Unlike the previous subsection, we shall define this directly (as opposed to first working with the rationals before lifting to the reals):

	\begin{prop}\label{prop:RootsReals}
		Define maps $(x,b)\mapsto x^{\frac{1}{b}}$ on the non-negative reals using:
		\[q<x^{\frac{1}{b}}\Leftrightarrow  q<0\lor \left(q^b<x\land q\in Q\right)\]
		\[r>x^{\frac{1}{b}}\Leftrightarrow r^b>x.\]
		Then $x^{\frac{1}{b}}$ is a real of the same kind as $x$ (lower, upper, or Dedekind).
	\end{prop}

	\begin{proof} Let $x$ be a non-negative lower real. Non-negativity of $x^{\frac{1}{b}}$ is immediate from definition. When $q<0$, downward closure and roundedness are obvious, so we shall assume that $q^b<x\land q\in Q$. In which case, downward closure says that if $q'<q$ for $q'\in \mathbb{Q}$, then $q<x^\frac{1}{b}\implies q'<x^{\frac{1}{b}}$. If $q'<0$, this is obvious. If $q'\geq 0$, this follows immediately from the monotonicity of exponentiation by $b\in\mathbb{N}_+$ (Lemma~\ref{lem:monotonerationals}). As for roundedness, we must show if $q^b<x$ then there exists rational $q'>q$ such that $q^b<(q')^b<x$. We know there exists $r\in Q_+$ such that $q^b<r<x$. By Lemma~\ref{lem:rational sandwich}, there exists $q'\in Q_+$ such that $q^b<(q')^b<r<x$, and we are done.

		The case for non-negative upper reals is analogous.

		As before, to define $\argu^{\frac{1}{b}}$ on non-negative Dedekinds, we shall need to verify the rest of the axioms from Definition~\ref{def:DedekindReals}. Left inhabitedness comes for free since $q<x^{\frac{1}{b}}$ for all negative rationals $q$. Right inhabitedness follows from the unboundedness of $\argu^b$ on the rationals (Lemma~\ref{lem:monotonerationals}).

		For separatedness, suppose $q<x^{\frac{1}{b}}<r$. Since $x$ is non-negative, and since $\argu^b$ reflects strict order, this immediately implies that $r>0$. As such, if $q<0$, then $q<r$ automatically. Hence, suppose instead that $q\geq 0$, and that $q^b<x<r^b$. Since $x$ is a \textit{separated} Dedekind, this implies that $q^b<r^b$, and so we get $q<r$ (again by Lemma~\ref{lem:monotonerationals}).

		Finally, we check locatedness. Suppose $q<r$. Again, if $q<0$, then we get that $q<x^{\frac{1}{b}}$ for free, so suppose $q\geq 0$. By Lemma~\ref{lem:monotonerationals}, we know that $0\leq  q^b < r^b$. Since $x$ is a \textit{located} Dedekind, this implies that either $q^b<x\vee x<r^b$, which in turn implies (by construction) that $q<x^{\frac{1}{b}}\vee x^{\frac{1}{b}}<r$, proving the axiom.
	\end{proof}

	The key property of $b$th roots, of course is that taking the root is inverse to raising to the power.

	\begin{prop}%
		\label{prop:rootsinteractexp} Given a non-negative real $x$, and $0\neq b\in\mathbb{N}$, we have that
		\[
		x=(x^{\frac{1}{b}})^b=(x^b)^{\frac{1}{b}}
		\text{.}
		\]
	\end{prop}
	\begin{proof} The proof of these identities for the one-sided reals is analogous, so we shall only prove it for the upper reals, which will also automatically extend the result to the Dedekind reals (cf. Corollary~\ref{cor:check1sided}).

		To prove  $x=(x^{\frac{1}{b}})^b$ for the upper reals, suppose $q>(x^\frac{1}{b})^b$. This means that $\exists q'\in Q_+$ such that $(q')^b>x$ and $q>(q')^b$, which implies that $q>x$. Conversely, suppose $q>x$. By roundedness, we know that there exists some $q''\in Q_+$ such that $q>q''>x\geq 0$. By Lemma~\ref{lem:rational sandwich}, we know that there exists some $q'\in Q_+$ such that $q>(q')^b>q''>x$, proving that $q>(x^{\frac{1}{b}})^b$.

		To prove $x=(x^b)^{\frac{1}{b}}$, note that $x=(x^\frac{1}{b})^{b}$ implies that $\big((x^b)^{\frac{1}{b}}\big)^b=x^b$. The result then follows since $\argu^b$ reflects the (non-strict) order on the upper reals by Lemma~\ref{lem:monotonereals}.
	\end{proof}

	\begin{cor}%
		\label{cor:rootsinteractexp}
		The following equations hold for $x,y$ non-negative reals, and $a,b,d\in\mathbb{N}$ where $b,d\neq 0$:
		\begin{gather*}
		(x^a)^{\frac1b}=(x^{\frac1b})^a
		\\
		x^{\frac1{bd}}=(x^{\frac1b})^{\frac1d}\text{,}\quad x^{\frac11}=x
		\\
		(xy)^{\frac1b}=x^{\frac1b}y^{\frac1b}.
		\end{gather*}
	\end{cor}
	\begin{proof}

		In each case, the proof is to raise both sides to an appropriate power,
		use equations already known for integer powers, and then take the root.
		For example, the first one follows from
		\[
		((x^{\frac1b})^a)^b
		= ((x^{\frac1b})^{ab}
		= ((x^{\frac1b})^b)^a = x^a
		\text{.}
        \qedhere
		\]
	\end{proof}

	\subsection{Non-negative Rational Exponents}%
	\label{subsec:RationalExpReals}
	Having defined $x^a$ and $x^{\frac{1}{b}}$ for $a\in\mathbb{N}$ and $b\in\mathbb{N}_+$ we combine these two constructions together as
	\[x^{\frac{a}{b}}=\big(x^a\big)^{\frac{1}{b}}\]
	Note that we get the fact that $x^{\frac{a}{b}}$ is a non-negative real for free due to Propositions~\ref{prop:NatNumReals} and~\ref{prop:RootsReals}. The only thing left to check is that this construction is well-defined with respect to the equivalence of rationals.

	\begin{prop}\label{prop:RationalExpReals} The exponential $x^q$,
		with $x$ being a non-negative real and $q$ a non-negative rational, is well-defined and
		satisfies the Basic Equations~\eqref{eq:BE1}--\eqref{eq:BE3}.
	\end{prop}
	\begin{proof} To show that $x^q$ is well-defined (with respect to the equivalence of rationals), we need to show that given any $\frac{a}{b}=\frac{c}{d}$, where $a,c\in\mathbb{N}$ and $b,d\in\mathbb{N}_+$, we have that $(x^{a})^{\frac{1}{b}}=(x^{c})^{\frac{1}{d}}$.
		Why is this? First, note that $x^{\frac{a}{b}}=x^{\frac{ak}{bk}}$, for $\frac{a}{b}\in Q$ and $k\in\mathbb{N}_+$. Indeed, by Proposition~\ref{prop:rootsinteractexp} and Corollary~\ref{cor:rootsinteractexp}, we have:
		\[x^{\frac{ak}{bk}}=(x^{ak})^{\frac{1}{bk}}=\bigg(\big((x^a)^k\big)^{\frac{1}{k}}\bigg)^\frac{1}{b}=x^{\frac{a}{b}}.\]
		More generally, suppose we have that $\frac{a}{b},\frac{c}{d}\in Q$ such that $\frac{a}{b}=\frac{c}{d}$. This obviously implies that $ad=bc$, and thus our previous computation yields the identity: $x^{\frac{a}{b}}=x^{\frac{ad}{bd}}=x^{\frac{bc}{bd}}=x^{\frac{c}{d}}$, as desired.

		To see why the Basic Equations hold for non-negative rational exponents, this follows follows algebraically from the Basic Equations already established and Corollary~\ref{cor:rootsinteractexp}.
		For example for the law of adding exponents, if $r=\frac{a}{b}$
		and $s=\frac{c}{d}$, then
		\[
		x^{r+s}=x^{\frac{ad+bc}{bd}} = \bigg(x^{\frac{1}{bd}} \bigg)^{ad+bc} = \bigg (x^{\frac{1}{bd}}\bigg)^{ad}\cdot \bigg (x^{\frac{1}{bd}}\bigg)^{bc} = x^{\frac{a}{b}}\cdot x^{\frac{c}{d}}=x^r\cdot x^s.
        \qedhere
		\]
	\end{proof}

	\begin{lem}\label{lem:monotonepositiverationals} Fix $q\in Q_+$. Then $\argu^q$ preserves and reflects non-strict order on non-negative reals. Further, it is an unbounded map that preserves and reflects strict order on non-negative Dedekind reals.
	\end{lem}
	\begin{proof} Preservation and reflection of strict (resp.\ non-strict) order on the positive Dedekind (resp.\ one-sided) reals is immediate from Lemma~\ref{lem:monotonereals}. Further, express $q$ as $\frac{a}{b}$ for $a,b\in\mathbb{N}_+$. We know for any positive Dedekind $x\in(0,\infty)$ that there exists $s\in\mathbb{Q}$ whereby $1<s$ and $x<s\leq s^a$. Since $s^a=(s^b)^\frac{a}{b}=(s^b)^q$, this proves that $\argu^q$ is unbounded.
	\end{proof}

	\subsection{Signed Rational Exponents}\label{subsec:signedratexp}  In this subsection, we extend the previous definition of rational exponentiation to also include the non-positive rationals. Here we must restrict to the case where the base is Dedekind but not one-sided. This is because inverting reverses orientation --- applying Definition~\ref{def:inverses}, the inverse of a lower real $x$ yields an upper real (and vice versa). Hence, much like subtraction, whilst negative exponents are well-defined on the Dedekinds, they are not well-defined on just the lower or upper reals alone. We further require the base to be \textit{positive} Dedekind as well since inverses are only well-defined for non-zero Dedekinds.

	Recall from Definition~\ref{def:inverses} that given any positive Dedekind $x\in (0,\infty)$, there exists a unique inverse $x^{-1}\in (0,\infty)$ such that $x^{-1}\cdot x=1$.

	\begin{defi}\label{def:signedratexp} Let $x\in(0,\infty)$, and $q\in\mathbb{Q}$. We define:
		\[x^q=\begin{cases}
		\, x^q  \quad\quad\qquad\,\,\,\,\,\,\text{if}\, q\geq 0\\
		\, (x^{-q})^{-1}\,\,\,\,\qquad\text{if}\, q\leq  0
		\end{cases}\]
	\end{defi}
	Using Lemma~\ref{lem:monotonepositiverationals} we can see that $0<x^q$. Further, we remark that this definition of non-positive exponentiation justifies our notation of denoting inverses as $\argu^{-1}$, as can be seen from the following lemma:

	\begin{lem}\label{lem:negativeinverseidentity} Fix $x\in(0,\infty)$. For any $q\in \mathbb{Q}$, we have that $x^{-q}=(x^{q})^{-1}=(x^{-1})^{q}$.
	\end{lem}
	\begin{proof}
	    For the first identity, if $q\geq 0$ then $x^{-q}=\left(x^{q}\right)^{-1}$ by definition;
	    while if $q \leq 0$ then
	    $\left(x^{q}\right)^{-1}
	      =\left(\left(x^{-q}\right)^{-1}\right)^{-1}=x^{-q}$.

	    As for the identity $(x^{q})^{-1}=(x^{-1})^{q}$, if $q\geq 0$ then it follows from the Basic Equations because

		\[x^{q}\cdot\left(x^{-1}\right)^{q}=\left(x\cdot x^{-1}\right)^{q}=1\text{.}\]
		Then for $q\leq 0$ we have
		\[\left(x^{-1}\right)^q
		    = \left(\left(x^{-1}\right)^{-q}\right)^{-1}
		    = \left(\left(x^{-q}\right)^{-1}\right)^{-1}
		    = x^{-q}
		    \text{.}
            \qedhere
		\]
%
	\end{proof}

	\begin{rem}[Gluing maps defined on subspaces of $\mathbb{Q}$ vs.  $\,\!\!\mathbb{R}$]%
    \label{rem:gluingissues} There is a subtle geometricity issue hidden in our construction that bears highlighting. Definition~\ref{def:signedratexp} hinges upon a case-splitting: we gave two separate definitions of $x^q$ based on whether $q\leq 0$ or $q\geq 0$, and (implicitly) claimed that this presents a geometric account of $x^q$ for all $q\in\mathbb{Q}$. Why is this? The short answer: unlike the case for $\mathbb{R}$, we get geometricity of the case-splitting essentially for free since $<$ is decidable on $\mathbb{Q}$ (cf. Discussion~\ref{disc:undecidableorder}).
	\end{rem}

	We conclude by proving the Basic Equations~\eqref{eq:BE1}--\eqref{eq:BE3}. A common theme runs through the proofs: for signed rational exponentiation, we must now keep track of how non-negative and non-positive exponents interact with one another, forcing us to consider the various possible cases. Nonetheless, most of these can be handled similarly (modulo some technical adjustments) and so the case-splitting primarily serves as a form of bookkeeping as opposed to being a sign of some hidden complexity.

	\begin{prop}\label{prop:ratBE2} Let $x\in (0,\infty)$, and $q,q'\in\mathbb{R}$. Then $(x^{q})^{q'}=x^{q\cdot q'}$.
	\end{prop}
	\begin{proof} Strict order is decidable on $\mathbb{Q}$, and so given any $q\in\mathbb{Q}$, we can split our proof into the cases when $q,q'$ have the same signs or opposite signs.

    \smallskip
		\textbf{Case 1:} $q,q'\geq 0$. By Proposition~\ref{prop:RationalExpReals}.

		\textbf{Case 2:} $q,q'\leq 0$. This follows from Case 1 and Lemma~\ref{lem:negativeinverseidentity}, which yields:
		\[(x^{q})^{q'}=\bigg(\big((x^{-q})^{-1}\big)^{-q'}\bigg)^{-1}=\bigg(\big((x^{-q})^{-q'}\big)^{-1}\bigg)^{-1}=\bigg(\big(x^{q\cdot q'}\big)^{-1}\bigg) ^{-1}=x^{q\cdot q'}\]

		\textbf{Case 3:} $q\leq 0\leq q'$. This also follows from Case 1 and Lemma~\ref{lem:negativeinverseidentity}, since:
		\[(x^{q})^{q'}=((x^{-q})^{-1}\big)^{q'}=\big((x^{-q})^{q'}\big)^{-1}=(x^{-q\cdot q'})^{-1}=\big(x^{q\cdot q'})^{-1}\big)^{-1}=x^{q\cdot q'}\]

		\textbf{Case 4:} $q'\leq 0\leq q$. By symmetry with Case 3.
	\end{proof}

	\begin{prop}\label{prop:ratBE1} Let $x\in(0,\infty)$, and $q,q'\in\mathbb{Q}$. Then $x^{q+q'}=x^{q}\cdot x^{q'}$.
	\end{prop}

	\begin{proof} Similar to Proposition~\ref{prop:ratBE2}, we split our proof into various cases, based on the sign of the rational exponents of the identity. By previous work, we have already shown the following case:

    \medskip
		\textbf{Basic Case:} $q,q'\geq 0$ (and thus $q+q'\geq 0$): Immediate from Proposition~\ref{prop:RationalExpReals}.

    \medskip
		We claim that all the possible (signed) combinations of $q,q'$ and $q+q'$ ultimately reduce to this basic case after some elementary algebraic manipulations. If at least one of them, say $q$, is negative, then the equation to prove is equivalent to
		\[x^{q'}=x^{q+q'}\cdot x^{-q},\]
		once we multiply both sides by $x^{-q}=(x^{q})^{-1}$. This is
        another instance of the identity in the statement, but with $(q,q')$ replaced by $(-q,q+q')$, with strictly fewer of the three exponents negative. If in addition $q+q'$ and/or $q'$ are also negative, then we iterate the process so that we eventually hit the Basic Case.
	\end{proof}

	\begin{prop} Let $x,y\in(0,\infty)$, and $q\in\mathbb{Q}$. Then $(x\cdot y)^{q}=x^q\cdot y^q$.
	\end{prop}
	\begin{proof} Immediate from Proposition~\ref{prop:RationalExpReals} and definitions.
	\end{proof}

	\section{Real Exponents (The General Case)}\label{sec:ExpII}
	Moving on to real exponents, we have to be careful with monotonicity if we are to include one-sided
	reals. This is because any map must be monotone with respect to the specialisation order. Hence, if an
	argument is a one-sided real, then the result is numerically monotone with respect to that argument if it is
	the same orientation, antitone if opposite.

	Fixing $\zeta$, the map $\argu^\zeta$ is monotone or antitone in $x$ according as $\zeta\geq0$ or $\zeta\leq0$ --- this follows immediately from the fact that inverting reverses orientation. Fixing $x$, the map $x^\argu$ is monotone or antitone in $\zeta$ according as $x\geq  1$ or $x\leq 1$ --- this is clearly seen in the case of rational exponents:

	\begin{prop}[Monotone/Antitone behaviour of rational exponents]\label{prop:monotoneantitone}\leavevmode
		\begin{enumerate}[label={(\roman*)}]
			\item 	Fix Dedekind real $x$ such that $x>1$. Then the map $x^{\argu}$ is strictly increasing on $\mathbb{Q}$.
			\item Fix Dedekind real $x$ such that $0< x< 1$. Then the  map $x^{\argu}$ is strictly decreasing on $\mathbb{Q}$.
			\item Fix one-sided real $x$ such that $x\geq 1$. Then the map $x^{\argu}$ is non-strictly increasing on $Q$.
			\item Fix one-sided real $x$ such that $0\leq x\leq 1$. Then the map $x^{\argu}$ is non-strictly decreasing on $Q$.
		\end{enumerate}
	\end{prop}
	\begin{proof} Fix Dedekind $x$ such that $x>1$. Suppose we are given some $r,s\in\mathbb{Q}$ such that $r<s$. By the exponent laws for rational exponents, this implies that $x^{s}=x^{r+s-r}=x^{r}\cdot x^{s-r}$. By Lemma~\ref{lem:monotonepositiverationals}, we have that $1<x^q$ for any positive rational $q>0$. Since $0<s-r\implies 1<x^{s-r}$, this in turn implies that $x^s=x^{r}\cdot x^{s-r}>x^{r}$, proving that $x^{\argu}$ is indeed strictly increasing on $\mathbb{Q}$. The case for $x^{\argu}$ when we have Dedekind $0<x<1$ is analogous.

		The same proof works for the one-sided case modulo the following adjustments: one, $x^{\argu}$ is not defined for negative rational exponents, so we restrict the map to just the non-negative rationals $Q$; and two,  Lemma~\ref{lem:monotonepositiverationals} now holds that $\argu^q$ only preserves \emph{non-strict} order, so we only get weakly monotonic/antitonic results for $x^{\argu}$ when $x$ is one-sided.
	\end{proof}

	What are the implications of these varying monotonicity behaviours? For one-sided real arguments, this fragments the exponentiation into different cases based on the ranges of values and the one-sided orientations. We present the possibilities in the table below.
	Each table entry shows the type of $x^\zeta$ for given types of $x$ and $\zeta$. Some combinations are impossible, because the monotonicities for $x$ and $\zeta$ conflict.
	\begin{equation}%
    \def\arraystretch{1.7}
	\label{eq:expTable}
	\begin{array}{r|cccc} 
	x\backslash\zeta & \overrightarrow{[0,\infty]}
	& \overleftarrow{[0,\infty]}
	& \overrightarrow{[-\infty, 0]}
	& \overleftarrow{[-\infty,0]}
	\\ \hline 
	\overrightarrow{[1,\infty]}
	& \overrightarrow{[1,\infty]}
	&&
	& \overleftarrow{[0,1]}
	\\
	\overleftarrow{[1,\infty]}
	&& \overleftarrow{[1,\infty]}
	& \overrightarrow{[0,1]} &
	\\
	\overrightarrow{[0,1]}
	&& \overrightarrow{[0,1]}
	& \overleftarrow{[1,\infty]} &
	\\
	\overleftarrow{[0,1]}
	& \overleftarrow{[0,1]}
	&&& \overrightarrow{[1,\infty]}
	\end{array}
	\end{equation}

	In Theorem~\ref{thm:zetaReal} we shall prove the monotone cases for $x\geq 1$ and $\zeta\geq 0$, top-left in the table, where $x$, $\zeta$ and $x^\zeta$ all have the same orientation.
	Meanwhile, however, it seems easiest to start with the case where $x\geq1$ is Dedekind, and $\zeta$ is signed: in Proposition~\ref{prop:onesidedrealexpI}, we lift $\zeta$ from rationals to one-sideds. In Theorem~\ref{thm:xzetaDed}, we extend this to get the case where $\zeta$ is Dedekind, and a gluing argument allows us to finally extend the construction to the whole space of positive Dedekinds. Further, since Theorem~\ref{thm:xzetaDed} also covers the case where $x$ is rational, we later use that in Theorem~\ref{thm:zetaDed} for cases where $x$ is one-sided.

	\subsection{Dedekind Real Base}\label{subsection:DedekindRealBASE} Once again, our plan of attack involves using the Lifting Lemma. 
	 Proposition~\ref{prop:monotoneantitone}, however, alerts us to the fact that the behaviour of $\{x^{q_n}\}_{n\in\mathbb{N}}$ differs depending on whether the base $0<x<1$ or $x>1$. Given the monotonicity condition of the Lifting Lemma, this indicates a natural case-splitting in our analysis.

	 As such, to control the behaviour of rational exponentials, we first restrict to the case when $x\geq 1$ --- this allows us to extend the range of $\zeta$ to the whole real line without worrying about monotonicity issues. We start by establishing the following two lemmas:

	\begin{lem}[Bernoulli's Inequality]\label{lem:bernoulli}
    For any positive real $x$ (Dedekind or one-sided), and any natural number $k\in\mathbb{N}$, we have the inequality:
		\[(1+x)^k \geq 1+ k\cdot x.\]
		\begin{proof} The proof is entirely algebraic, so it works identically regardless of whether $x$ is Dedekind or one-sided. We proceed by induction. For our base case $k=0$, we want to show:
			\[(1+x)^0\geq 1+ 0\cdot x = 1.\]
			But this is obvious since $(1+x)^0=1$. To prove the inductive hypothesis, suppose that the desired inequality holds for $k$. To show that it also holds for $k+1$, note that:
			\begin{align*}
			(1+x)^{k+1}&= (1+x)^k\cdot(1+x)\\
			&\geq (1+kx)\cdot (1+x)\\
			&= 1 + (k+1)\cdot x + k\cdot x^2\\
			&\geq 1+ (k+1)\cdot x,
			\end{align*}
			where the first inequality is by the inductive hypothesis, and the last inequality by the fact that multiplying two non-negative reals yields another non-negative real.
		\end{proof}

	\end{lem}

	\begin{lem}[Continuity Lemma]\label{lem:continuityI}
    Suppose $0<q<q'$, for a pair of (positive) rationals $q,q'\in Q_+$. Let $x$ be a Dedekind real such that $x\geq 1$. Then there exists a positive integer $k\in\mathbb{N}_+$ such that $q\leq q\cdot x^{\frac{1}{k}}<q'$.

	\end{lem}

	\begin{proof} Denote $\delta:=\frac{q'}{q}-1$ (which is a positive rational), and so $q'=q(1+\delta)$. By Bernoulli's Inequality and the Archimedean property, there exists $k\in\mathbb{N}_+$ such that:
		\[x<1+k\cdot \delta\leq (1+\delta)^k.\]
		By Lemma~\ref{lem:monotonepositiverationals}, this
		implies $1\leq x^{\frac{1}{k}}<1+\delta$, and so further multiplying through by $q$ yields:
		\[q\leq q\cdot x^{\frac{1}{k}}<q'. \qedhere\]
	\end{proof}

	Fixing a Dedekind $x\in\ropen{1,\infty}$, our definition of $x^\zeta$ (for arbitrary $\zeta\in\mathbb{R}$) rests on two levels of extensions. We first extend rational exponents to one-sided exponents, before combining the one-sided exponents to yield a Dedekind exponent.\footnote{Note that in the previous section (e.g.\ Proposition~\ref{prop:NatNumReals}), we fixed the exponent before applying lifting arguments to the base. In this setting, we work inversely: we fix the base before applying lifting arguments to the exponent.}

	\begin{prop}\label{prop:onesidedrealexpI}
		For Dedekind $x\geq 1$, the exponentiation by arbitrary rationals can be extended to one-sided exponents, giving exponentiation maps
		\[
		\ropen{1,\infty}\times\overrightarrow{[-\infty,\infty]} \to \overrightarrow{[0,\infty]}
		\text{ and }
		\ropen{1,\infty}\times\overleftarrow{[-\infty,\infty]} \to \overleftarrow{[0,\infty]}
		\text{.}
		\]
		Each map is unique subject to being monoid homomorphism for $\zeta\in \overrightarrow{[-\infty,\infty]}$ or  $\zeta\in \overleftarrow{[-\infty,\infty]}$,
		and satisfies the Basic Equations~\eqref{eq:BE1}--\eqref{eq:BE3}.
	\end{prop}
	\begin{proof} Fix Dedekind $x\in\ropen{1,\infty}$
		We prove the result for the lower case of $\zeta$. The upper case is analogous.

		Following Remark~\ref{rem:liftingvariation}, we check the two lifting conditions of the Lifting Lemma based on $\overrightarrow{[-\infty, \infty]}\cong\RIdl(\mathbb{Q}\cup\{-\infty\},<)$ (the involvement of $-\infty$ is largely irrelevant, so we shall leave this case to the reader). Monotonicity amounts to holding for any $q,r\in\mathbb{Q}$, we have that $q<r\implies x^q\leq x^r$ --- which is immediate from Proposition~\ref{prop:monotoneantitone}. For continuity: if $r<x^q$, with $r,q\in\mathbb{Q}$, we want rational $q' < q$ with $r < x^{q'}$. This is clear if $r\leq 0$, hence suppose instead that $0<r$. By strict order $<$, there exists $r'\in\mathbb{Q}$ such that $0<r<r'<x^q$. Applying the Continuity Lemma, we find $k$ with $rx^{1/k} < r'<x^q$. Then $r < x^{q-1/k}$.

		The basic equations follow from the case of rational exponents.
	\end{proof}

	\begin{thm}%
		\label{thm:xzetaDed}
		We have an exponentiation map on the Dedekinds
		\[
		\exp\colon (0,\infty)\times \Reals \to (0,\infty)
		\text{.}
		\]
		It satisfies the exponent laws, i.e.\ Basic Equations~\eqref{eq:BE1} and~\eqref{eq:BE2}.
	\end{thm}
	\begin{proof}First, we claim the two maps of Proposition~\ref{prop:onesidedrealexpI} combine to give an exponentiation map
		$\ropen{1,\infty}\times\Reals \to (0,\infty)$,
		by $x^\zeta=(x^{L_\zeta}, x^{R_\zeta})$.  From the definition, we calculate that:
		\[q<x^{\zeta} \Leftrightarrow  \exists q'\in \mathbb{Q}.(q'<\zeta\,\land \, q< x^{q'})\]
		\[q>x^{\zeta} \Leftrightarrow  \exists q'\in \mathbb{Q}.(q'>\zeta\,\land \, q>x^{q'}).\]
		As before, to show this map is well-defined, it remains for us to check inhabitedness, positiveness, separatedness and locatedness. Inhabitedness and positiveness are easy: we know there exist rationals $q_0,r_0\in\mathbb{Q}$ such that $q_0<\zeta<r_0$ (since $\zeta$ is an inhabited Dedekind) and so there exists $q,r\in\mathbb{Q}$ such that $q<x^{q_0}$ and $r>x^{r_0}$ (since $0<x^{q_0}$ and $x^{r_0}$ are inhabited as well).

		For separatedness, suppose $x^{R_\zeta}<q<x^{L_\zeta}$.
		Then we have $x^{r_1}<q<x^{r_2}$ for some rationals $r_1,r_2$ with $R_\zeta<r_1$ and $r_2<L_\zeta$. But $\zeta$ is separated, and so $r_2<r_1$, which implies $x^{r_2}<x^{r_1}$ by Proposition~\ref{prop:monotoneantitone}, a contradiction.

		For locatedness, suppose we have rationals $q<r$. When $q\leq 0$, then $q<x^{\zeta}$ since $x^{\zeta}$ is positive, so let's assume $q>0$. Leveraging previous results, we then define a series of parameters:
		\begin{itemize}
			\item Denote $r':=(q+r)/2$.
			\item By the Continuity Lemma, find $k$ such that $r'x^{1/k}<r$.
			\item By Remark~\ref{rem:locAxiom}, find a rational $s$ such that $s<\zeta<s+1/k$.
		\end{itemize}
		Since $x^s$ is a (located) Dedekind real, this means that $q<x^s$ or $x^s<r'$. If $q<x^s$ then $q<x^\zeta$, while if $x^s<r'$ then $x^\zeta<x^{s+1/k}=x^s\cdot x^{1/k}<r'\cdot x^{1/k}<r$.

		The exponent laws for this map follow from those for the maps in Proposition~\ref{prop:onesidedrealexpI}.

		Having defined $x^\zeta$ for $x\in\ropen{1,\infty}$, we can also define an exponentiation
		$\lopen{0,1}\times\Reals \to (0,\infty)$,
		by $x^\zeta:=(x^{-1})^{-\zeta}$. Let us now fix $\zeta\in\mathbb{R}$. Since the two maps agree on $x=1$, we can apply the Gluing Principle (Proposition~\ref{prop:gluingcoequaliser}) to glue them together and obtain the general exponentiation map via the pushout property:
		\[\begin{tikzcd}
		\{1\} \arrow[hook]{r} \arrow[hook]{d}& (0,1\text{]} \arrow[hook]{d}
		\arrow[ddr,bend left,"\argu^{\zeta}"] \\
		\text{[}1,\infty)\arrow[hook]{r}  \arrow[drr,bend right,swap,"\argu^{\zeta}"] & (0,\infty)
		\arrow[dr,dashed,"\argu^{\zeta}"] \\
		&& (0,\infty)
		\end{tikzcd}
        \]
		Externalising yields the desired exponentiation map $(0,\infty)\times\mathbb{R}\rightarrow (0,\infty)$ (cf. Convention~\ref{conv:fixingx}). The exponent laws follow immediately.
	\end{proof}

	\begin{rem} The reader may have noticed that we only proved the exponent laws in Theorem~\ref{thm:xzetaDed} --- this is because the base product equation ($(x\cdot y)^\zeta=x^\zeta \cdot y^\zeta$) does not transfer directly over the gluing, and thus requires separate proof. We defer proof of the base product law to Section~\ref{subsec:onesidedrealBase}.
	\end{rem}

	As an immediate corollary of the theorem, we generalise Proposition~\ref{prop:monotoneantitone} to obtain the following monotonic/antitonic result for real exponentiation with respect to the \emph{exponent}:

	\begin{cor}\label{cor:monotoneWRTexp} Let $\zeta,\zeta'\in\mathbb{R}$ such that $\zeta<\zeta'$. Then:
		\begin{enumerate}[label={(\roman*)}]
			\item  If $x\in(1,\infty)$ is a Dedekind real, then $x^{\zeta}<x^{\zeta'}$.
			\item if $x\in (0,1)$ is a Dedekind real, then $x^{\zeta'}<x^{\zeta}$.
		\end{enumerate}
	\end{cor}
	\begin{proof} Let $x\in (1,\infty)$, and $\zeta,\zeta'\in\mathbb{R}$ such that $\zeta<\zeta'$. By Exponent Law~\eqref{eq:BE1} established in Theorem~\ref{thm:xzetaDed}, we obtain: \[x^{\zeta'}=x^{\zeta'-\zeta+\zeta}=x^{\zeta'-\zeta}\cdot x^{\zeta}.\]
		We claim $x^{\zeta'-\zeta}>1$. Why?
		By strict order $<$ on the reals, pick some $r\in Q_+$ such that $0<r<\zeta'-\zeta$. Unpacking definitions, we know $q<x^{r}\implies q<x^{\zeta'-\zeta}$. By Proposition~\ref{prop:monotoneantitone}, we get $x^r>1$, which in turn implies that $x^{\zeta'-\zeta}>1$. In particular, we get:
		\[1<x^{\zeta'-\zeta}\implies x^{\zeta}<x^{\zeta'-\zeta}\cdot x^{\zeta}=x^{\zeta'}.\]
		The case when $x\in (0,1)$ is entirely analogous. \end{proof}

	\subsection{One-Sided Real Base}\label{subsec:onesidedrealBase} In contrast to the previous subsection, we now work with a one-sided real base and Dedekind exponent. Subtleties regarding negative exponents (as discussed at the start of this section) require some care, but they can be manoeuvred around sensibly.

	\begin{lem}%
		\label{lem:finalBasicRat}
		The base product law $(s\cdot t)^\zeta=s^\zeta \cdot t^\zeta$ holds for  $\zeta\in\Reals$ and \emph{positive rationals} $s,t\in Q_+$.
	\end{lem}
	\begin{proof} Similar to Propositions~\ref{prop:ratBE1} and~\ref{prop:ratBE2}, we shall need to case split based on which side of 1 the values $s,t,s\cdot t$ lie.
		If all three are at least 1, then the equation holds from Proposition~\ref{prop:onesidedrealexpI}. If at least one of them, say $s$, is less than 1, so $s^\zeta=(s^{-1})^{-\zeta}$, then the equation is equivalent to $(s^{-1})^\zeta\cdot  ((s\cdot t)^{-1})^\zeta=t^\zeta$, with $s^{-1}>1$.
		We may have to apply similar transformations for $t$ and $st$, but eventually we end up with an equation in which all three values are at least 1.
	\end{proof}

	To prove the base product law more generally, we shall work via the one-sided reals in our usual way.

	\begin{thm}%
		\label{thm:zetaDed}
		For Dedekind exponent $\zeta\geq 0$ and one-sided base $x$, we can define exponentiation maps
		\[
		\overrightarrow{\lopen{0,\infty}}\times \ropen{0, \infty} \to \overrightarrow{\lopen{0,\infty}}
		\text{ and }
		\overleftarrow{\ropen{0, \infty}}\times \ropen{0, \infty} \to \overleftarrow{\ropen{0, \infty}}
		\]
		such that, for $x$ Dedekind, $(L_x)^\zeta = L_{x^\zeta}$ and $(R_{x})^\zeta = R_{x^\zeta}$. The Basic Equations hold for these maps.
	\end{thm}
	\begin{proof} We prove the lower case. Following Remark~\ref{rem:gluingissues}, we know that a map from the rationals can be defined (geometrically) via case-splitting on $<$. Thus define:
		\begin{align*}
		Q_+\times \ropen{0, \infty}&\longrightarrow \overrightarrow{\lopen{0,\infty}}\\
		(s,\zeta)&\longmapsto \begin{cases}
		L_{s^{\zeta}}, \quad\,\,\qquad\qquad\qquad \text{if $s\geq 1$}\\
		(R_{(s^{-1})^\zeta})^{-1}, \qquad\qquad \text{if $s<1$}.
		\end{cases}
		\end{align*}
		Note that $(R_{(s^{-1})^\zeta})^{-1}$ is indeed a lower real since $\argu^{-1}\colon \overrightarrow{\lopen{0,1}}\cong\overleftarrow{\ropen{1,\infty}}$ (cf. Definition~\ref{def:inverses}).

		We now apply the Lifting Lemma in the case of $\overrightarrow{\lopen{0,\infty}}\cong \RIdl(Q_+,<)$. Monotonicity for $s<s'$ essentially follows from  Lemma~\ref{lem:finalBasicRat}, which yields:
		\[\left(\frac{s'}{s}\right)>1\implies (s')^\zeta\cdot (s^{-1})^{\zeta}=\left(\frac{s'}{s}\right)^{\zeta}\geq 1 \implies (s')^\zeta\geq s^\zeta.\]

		For continuity, suppose $q<s^\zeta$. If $1\leq s$, then by definition of $s^\zeta$ we have $0<q<s^r$ for some rational $r<\zeta$. Let $r=\frac{a}{b}$ for $a,b,\in\mathbb{N}_+$. Raising to the power of $b$, we know that  $q<s^{\frac{a}{b}}\iff q^b < s^a$ by Lemma~\ref{lem:monotonepositiverationals}. Applying Lemma~\ref{lem:rational sandwich}, there exists some positive rational $s'\in Q_+$ such that $q^b<(s')^a<s^a$, which (taking $b$th roots) yields the desired inequality $q<(s')^{r}<s^{r}<s^{\zeta}$.
		If instead $s<1$, then we have $s^\zeta=(s^{-1})^{-\zeta}$,
		and so $(s^{-1})^\zeta < q^{-1}$. By definition of  $(s^{-1})^\zeta$, there exists $r\in Q_+$ with $\zeta<r$ and $(s^{-1})^r<q^{-1}$. Applying Lemma~\ref{lem:monotonepositiverationals} again, we know there exists some $t$ such that  $s^{-1}<t<(q^{-1})^{\frac{1}{r}}$, and so $t^r<q^{-1}$.
	Thus, since $s^{-1}<t\iff t^{-1}<s$, by Corollary~\ref{cor:monotoneWRTexp} we get $q<(t^{-1})^r<(t^{-1})^\zeta<s^\zeta$.

		Finally, checking definitions, it is clear that $(L_{x})^{\zeta}=L_{x^{\zeta}}$. The proof for upper reals is similar\footnote{Note that the definition includes $0^0=\inf_{0<s\in\mathbb{Q}}s^0=1$.}.

		By Theorem~\ref{thm:xzetaDed} and Lemma~\ref{lem:finalBasicRat}, the basic laws hold for rational bases, and we can lift those to the one-sided bases.
	\end{proof}

	\begin{cor}%
		\label{cor:finalBasic}
		The map $\exp\colon (0,\infty)\times \mathbb{R}\rightarrow (0,\infty)$ of Theorem~\ref{thm:xzetaDed} satisfies the base product law.
	\end{cor}
	\begin{proof}
		Fixing $\zeta$, the base product equation is between two maps from $(0,\infty)^2$ to $(0,\infty)$. Since $(0,\infty)$ is locally compact, hence exponentiable, this amounts to equality between two maps $1 \to (0,\infty)^{(0,\infty)^2}$,
		and that is a subspace of 1 --- internally in $\mathcal{S}\mathbb{R}$.
		Hence the $\zeta$s for which that internal subspace is 1 form a subspace of $\mathbb{R}$. To prove that subspace is the whole of $\mathbb{R}$, we use the fact that $\mathbb{R}$ is the subspace join of $(-\infty, 0)$ and its closed complement $\ropen{0, \infty}$ (see, e.g.~\cite{StoneSp}, or, for a geometric treatment,~\cite{Vi2}). It thus  suffices to check the equation in the two cases $\zeta<0$ and $0\leq\zeta$.

		If $0\leq \zeta$, then the base product law follows from Theorem~\ref{thm:zetaDed}. If $\zeta<0$, then we reduce to the former case by applying the inverse map $\argu^{-1}$:
		\[(x\cdot y)^{\zeta}=\big((x\cdot y)^{-\zeta}\big)^{-1}=\big(x^{-\zeta}\cdot y^{-\zeta}\big)^{-1}=x^\zeta\cdot y^\zeta. \qedhere \]\end{proof}

	As an application of the base product law, we generalise Lemma~\ref{lem:monotonepositiverationals}, and establish monotonic/antitonic result for real exponentiation with respect to the \emph{base}:

	\begin{cor}\label{cor:monotoneWRTbase} Let $x,y\in (0,\infty)$ be positive Dedekind reals. Then:
		\begin{enumerate}[label={(\roman*)}]
			\item If $\zeta$ is a positive Dedekind real, then $x< y\implies x^{\zeta}< y^{\zeta}$.
			\item If $\zeta$ is a negative Dedekind real, then $x<y\implies y^{\zeta}<x^{\zeta}$.
		\end{enumerate}
	\end{cor}
	\begin{proof} Suppose $\zeta\in (0,\infty)$. Let us first prove (i) when $x=1$. In which case, pick some $q\in\mathbb{Q}$ such that $0<q<\zeta$. By Corollary~\ref{cor:monotoneWRTexp}, deduce that $1<y$ implies $1=1^{\zeta}<y^{q}<y^{\zeta}$, as desired.  In the general case when $x<y$ for $x,y\in(0,\infty)$, observe that $x<y\implies 1<y\cdot x^{-1}$. Apply Corollary~\ref{cor:finalBasic} to our previous calculation to get $1<(y\cdot x^{-1})^{\zeta}=y^{\zeta}\cdot x^{-\zeta}$, which in turn implies that $x^{\zeta}<y^{\zeta}$, proving (i). The case when $\zeta$ is negative follows from (i) and the fact that inverses reverese orientation.
	\end{proof}

	For completeness, we also deal with two entirely one-sided cases, as seen in table~\eqref{eq:expTable}.
	Proposition~\ref{prop:RationalExpReals} defines $x^q$ for $0<x$ a one-sided real and $0\leq q$ rational. Restricting to $1\leq x$, we now lift $q$ to one-sided $\zeta$.

	\begin{thm}%
		\label{thm:zetaReal}
		For one-sided $x\geq 1$, the exponentiation by non-negative rational exponents (Subsection~\ref{subsec:RationalExpReals}) can be extended to one-sided exponents, giving exponentiation maps
		\[
		\overrightarrow{[1,\infty]}\times\overrightarrow{[0,\infty]} \to \overrightarrow{[1,\infty]}
		\text{ and }
		\overleftarrow{[1,\infty]}\times\overleftarrow{[0,\infty]} \to \overleftarrow{[1,\infty]}
		\text{.}
		\]
		They satisfy the Basic Equations~\eqref{eq:BE1}--\eqref{eq:BE3}.
	\end{thm}
	\begin{proof}
		We prove the lower case. The upper case is very similar.

		Fix $x\in \overrightarrow{[1,\infty]}$, and recall that $Q$ denotes the non-negative rationals. Following Remark~\ref{rem:liftingvariation}, we apply the Lifting Lemma in the case of $\overrightarrow{[0,\infty]}\cong \RIdl(Q,<)$, with the special understanding that $0<0$.

		Monotonicity follows from Proposition~\ref{prop:monotoneantitone}. For continuity, suppose $r<x^q$ with $0< q$. We want to prove $r<x^{q'}$ for some $q'$ with $0< q'<q$. If $r<1$ then we can just take $q'=\frac{q}{2}$, so suppose $1\leq r$. Since $\argu^q$ preserves and reflects strict order, we may find some rational $s$ such that $1\leq r^{\frac{1}{q}}<s<x$, which in turn implies that $r<s^q<x^q$.
		Now using the fact that $s$, a rational, is Dedekind,
		we can apply the argument of Proposition~\ref{prop:onesidedrealexpI} to find our $q'$.

		The Basic Equations follow from the rational case.
	\end{proof}

	\section{Logarithms}\label{sec:log}
	Let $b\in(1,\infty)$ be Dedekind. Proposition~\ref{prop:onesidedrealexpI} and Theorem~\ref{thm:xzetaDed} then gives us three maps $b^\argu$ from $\mathbb{R}$ to $(0,\infty)$ for Dedekinds or, with adjustments for zero and infinities, for one-sided reals. The main result of this section is to prove that all three maps are invertible by constructing the relevant $\log_b$ maps. In the case of Dedekinds, this yields the following:
	\begin{align*}
	\log\colon (1,\infty) &\!\times\! (0, \infty) \longrightarrow\mathbb{R}\\
	(\,b\,&, \, y\,) \longmapsto \log_b(y)
	\end{align*}
	providing a geometric account of the usual logarithmic map. We remark that the case-splitting indicated by the monotone/antitone behaviour of rational exponents $x^q$ when $x< 1$ vs. $x> 1$ emerges here through the case-splitting of the logarithmic base $b$, which forces us to consider the case of $b\in (0,1)$ and $b\in(1,\infty)$ separately.

	Before proceeding, we shall need the following lemma.

	\begin{lem}\label{lem:logarithmcts} Let $b\in (1,\infty)$ be Dedekind, and $q,q'\in Q_+$ be positive rationals such that $0<q<q'$. Then there exists rational $r$ such that $q<b^{r}<q'$.
	\end{lem}
	\begin{proof}[Proof of Lemma] This is a sharper version of the Continuity Lemma~\ref{lem:continuityI}, which only required the Dedekind base to satisfy $b\geq 1$. In contrast, this lemma requires the Dedekind base $b$ to be \textit{strictly} greater than $1$ (it is clearly false when $b=1$).

		We start by defining a series of parameters:
		\begin{itemize}
			\item By Bernoulli's Inequality (Lemma~\ref{lem:bernoulli}), find a natural number $t$ such that $q' < b^t$, and a negative integer $s$ such that $b^s < q$
			(i.e.\ $q^{-1} < b^{-s}$). 
			\item By the Continuity Lemma, find $k\in\mathbb{N}_+$ such that $1\leq b^{\frac{1}{k}}<1+\frac{q'-q}{b^t}$.

		\end{itemize}
		Observe that $s+\frac{i}{k}\leq t$, for all $i\in\mathbb{N}$ such that $0\leq i\leq (t-s)k$, with equality exactly when $i=(t-s)k$. This yields:
		\[ b^{s}\cdot b^{\frac{(i+1)}{k}}-b^{s}\cdot b^{\frac{i}{k}}=b^{s+\frac{i}{k}}(b^{\frac{1}{k}}-1)<b^{t}\bigg(\frac{q'-q}{b^t}\bigg)=q'-q.\]

		By Corollary~\ref{cor:monotoneWRTexp}, $\big\{b^{s+\frac{i}{k}}\big\}_{0\leq i\leq (t-s)k}$ is a monotonic sequence of \textit{located} Dedekinds. In particular, this means $q<b^{s+\frac{i}{k}}\vee b^{s+\frac{i}{k}}<q'$ for all relevant $i$. Thus, by a reasoning similar to the proof of Lemma~\ref{lem:rational sandwich}, one shows there indeed exists an $i\in\mathbb{N}_+$ so that $q< b^{s+\frac{i}{k}}<q'$. \end{proof}

	\begin{thm}\label{thm:logarithmsI} Fix $b\in (1,\infty)$. We can then define one-sided logarithm maps
		\[\log_b\colon \overrightarrow{[0,\infty]}\rightarrow \overrightarrow{[-\infty,\infty]} \quad \text{and} \quad \log_b\colon \overleftarrow{[0,\infty]}\rightarrow \overleftarrow{[-\infty,\infty]}\]
		inverse to the corresponding exponentiation maps $b^{\argu}$ on the one-sideds. These combine to yield an isomorphism on the Dedekinds
		\[\log_b\colon (0,\infty)\xrightarrow{\sim}(-\infty,\infty)\]
	\end{thm}
	\begin{proof} The proof proceeds in stages.
	\paragraph{Step 0: Set-up}	Fix $b\in (1,\infty)$. In the case where $y$ is a one-sided real, we define the $\log_b$ maps as:
		\[q<\log_b(y) \Leftrightarrow  b^q<y\]
		\[q>\log_b(y)\Leftrightarrow b^q>y \]
		with the understanding that when $y$ is a lower real, we define $b^q<y$ to mean $b^q<r<y$ for some rational $r$ (and similarly when $y$ is upper). Note that this definition makes sense since $b^q$ is Dedekind. Finally, we remark that we shall freely make use of Corollary~\ref{cor:monotoneWRTexp} without explicit mention, in particular the fact that $q<r \implies b^{q}<b^r$ for any rationals $q$ and $r$.
\paragraph{Step 1: $\log_b(y)$ as a one-sided}	To verify downward closure and upper-roundedness for the lower reals, observe that this coincides with verifying the monotonicity and continuity conditions for the Lifting Lemma in our proof of Proposition~\ref{prop:onesidedrealexpI}. We remark that when $y=0$, then $\log_b0$ is the empty lower real, i.e.\ $-\infty$. The case for upper reals is similar.

\paragraph{Step 2: $\log_b(y)$ as a Dedekind} After Step 1, it remains to check inhabitedness, separatedness and locatedness. Inhabitedness can be derived from Lemma~\ref{lem:logarithmcts}.

		To show separatedness, suppose $q<\log_b(y)<r$. By definition, this means $b^q<y<b^r$. By decidability of $<$ on $\mathbb{Q}$, we know that $r\leq q$ or $q<r$. If $r\leq q$, then this yields $b^r\leq b^{q}<b^{r}$, a  contradiction. Hence, this forces the inequality $q<r$.

		For locatedness, suppose we have $q,r\in\mathbb{Q}$ such that $q<r$. As before, we know that $q<r$ yields the inequality $b^{q}<b^{r}$. Pick $u_1,u_2\in\mathbb{Q}$ such that $b^{q}<u_1<u_2<b^{r}$. Since $y$ is located, either $u_1<y$ or $y<u_2$. Since $u_1<y\implies b^{q}<y$ and $y<u_2\implies y<b^{r}$, this implies $q<\log_b(y)\vee r>\log_b(y)$, i.e.\ $\log_b(y)$ is located.

\paragraph{Step 3: $\log_b$ and $b^{\argu}$ are inverses} We prove $\log_b$ and $b^{\argu}$ are inverses on the one-sideds, where
		\[b^{\argu}\colon \overrightarrow{[-\infty,\infty]}  \rightarrow\overrightarrow{[0,\infty]}\quad \text{and} \quad b^{\argu}\colon\overleftarrow{[-\infty,\infty]}\rightarrow \overleftarrow{[0,\infty]}. \]
		The case for the Dedekinds follows immediately by Corollary~\ref{cor:check1sided}.

\paragraph{Step 3a: Show that $b^{\log_{b}(y)}=y$.}
Now suppose $q<b^{\log_b(y)}$. Then by definition (cf. Theorem~\ref{thm:xzetaDed}), $\exists q'\in\mathbb{Q}$ such that $q'<\log_b(y)$ and $q<b^{q'}$. Since $q'<\log_b(y)\implies b^{q'}<y$, this assembles to yield the inequality $q<b^{q'}<y$, proving that $b^{\log_b(y)} \leq y$. Conversely, suppose $q<y$. If $q<0$, then $q<b^{\log_b(y)}$ automatically. If $q\geq 0$, then pick $q',q''\in Q_+$ such that $q<q'<q''<y$. By Lemma~\ref{lem:logarithmcts}, there exists $r\in\mathbb{Q}$ such that $q<q'<b^r<q''<y$, hence $r<\log_b(y)$ and $q<b^{\log_b (y)}$, i.e.\ $y\leq b^{\log_b(y)}$. Since $y\leq b^{\log_b(y)}$ and $ b^{\log_b(y)}\leq y$, this shows that $b^{\log_b(y)}=y$ for lower reals. The case for the upper reals is entirely analogous.
\paragraph{Step 3b: Show that $\log_{b}(b^\zeta)=\zeta$}  Suppose $q<\log_b(b^\zeta)$. Then, unwinding definitions, there exists rationals $q',q''\in\mathbb{Q}$ such that $b^q<q'<b^{\zeta}$ and $q''<\zeta\land q'<b^{q''}$. In particular, this yields the inequality $b^{q}<b^{q''}$, which in turn implies $q<q''<\zeta$, proving that $\log_b(b^{\zeta})\leq \zeta$. Conversely, suppose that $q<\zeta$. A routine calculation shows that $b^{q}<b^{\zeta}$, which by definition yields $q<\log_b(b^{\zeta})$, proving that $\zeta\leq \log_b(b^\zeta)$. Putting everything together yields $\log_{b}(b^\zeta)=\zeta$. As before, the case for upper reals is entirely analogous.
	\end{proof}

	\begin{obs} As a sanity check, note that Steps 3a and 3b in the previous proof verify standard logarithmic identities. In particular, for the map $\log_b\colon (0,\infty)\rightarrow (-\infty,\infty)$ on the Dedekinds, Step 3 essentially says: ``Given any $y\in(0,\infty)$, $\log_b(y)$ is the \textit{unique} Dedekind $\zeta$ such that $b^{\zeta}=y$ holds.'' In particular, suppose $y=b$. Then since $b^{1}=b$, uniqueness of $\log_b(b)$ implies that $\log_b(b)=1$, as expected. Similarly, suppose $y=1$. Since $b^0=1$, this thus implies that $\log_b(1)=0$, again exactly as expected.
	\end{obs}

	Theorem~\ref{thm:logarithmsI} fixes $b\in(1,\infty)$ in order to define a map $\log_b \argu \colon (0,\infty)\rightarrow(-\infty,\infty)$ on the Dedekinds. Externalising this map yields the desired map
	\[\log\colon(1,\infty)\times (0,\infty)\rightarrow \mathbb{R}.\]

	\begin{rem} One can also define a logarithm map with base $b\in (0,1)$ in terms of the previous $\log$ map, as follows:
		\begin{align*}
		\log\colon(0,1)&\times (0,\infty)\rightarrow \mathbb{R}\\
		(\, b &,y\, ) \longmapsto -\log(\,{b^{-1}},y\,).
		\end{align*}
	\end{rem}

	\section{Comparisons with other approaches and Future Applications  }\label{sec:ExpNT}

	\renewcommand{\thesubsection}{\arabic{section}.\arabic{subsection}}

	\subsection{Comparisons with Other Possible Approaches} It is reasonable to ask: why did we choose to develop point-free real exponentiation in the way we did? For instance, why did we choose to work with Dedekind reals as opposed to Cauchy reals? Or why did we not choose to define exponentiation via power series --- e.g.\ by first defining the functions
	\[\exp(x):= \displaystyle\sum^{\infty}_{n=0}\frac{x^n}{n!} \quad \quad \text{for $x\in\mathbb{R}$}\]
	\[\displaystyle\ln(x):=\int^{x}_1 t^{-1} dt \quad\quad\text{for $x\in (0,\infty$)},\]
	before defining $x^\zeta\!:= \exp \,( \zeta \ln (x))$, for $x\in (0,\infty)$ and $\zeta\in\mathbb{R}$? The answer to both questions is that we felt that our chosen approach would be comparatively cleaner in the point-free setting. Indeed, whilst there does exist a point-free account of quotiented Cauchy reals~\cite[Theorem 7.8]{Vi6} and of integration~\cite{Vi7}, both descriptions involve rather complicated and technical details. In contrast, by choosing to define real exponentiation by successively lifting it from the rational case to the reals, our construction explicitly (and geometrically) highlights how many familiar algebraic properties of real exponentiation are in fact inherited from the properties of $\mathbb{Q}$ as one might expect --- something which would likely to have been obscured in the other two approaches.

	One may also reasonably ask: why even construct a point-free account of real exponentiation in the first place? One simple answer is that point-free topology is a school of constructive mathematics with many attractive features, and so it's worth translating familiar ideas from real analysis into this setting and demonstrating how they (more or less) work in the ways we expect them to. From a methodological standpoint, it is also instructive to illustrate how point-wise reasoning works in a point-free setting. A slightly deeper answer, however, comes from our interest in applying ideas from geometric logic to questions arising from number theory, which we explain more fully in the next subsection.

	\subsection{Applications to Number Theory and Arithmetic Geometry} Suppose we have some object $X$ defined over $\mathbb{Q}$. In number theory, a result is often said to follow Hasse's Local-Global Principle just in case some property $P$ holds for $X$ over $\mathbb{Q}$ iff $P$ holds for $X$ over all non-trivial completions of $\mathbb{Q}$ (up to topological equivalence). The classic examples concern the existence of non-trivial rational solutions to polynomials. To illustrate, consider some polynomial equation,
 	say:
	\[X^3 + Y^3 + Z^3 = 0.\]
One may then ask: is it true that there exists a non-trivial rational solution [i.e.\ rationals $a,b,c\in\mathbb{Q}$ such that $a^3+b^3+c^3=0$ and $a,b,c$ are not all 0] iff there exists a non-trivial solution over all the non-trivial completions of $\mathbb{Q}$?

Given an arbitrary polynomial, the problem of determining the local conditions for the existence of rational solutions (if any) is very hard in full generality. There are well-known cases when Hasse's Principle does indeed hold (e.g.\ quadratic forms over $\mathbb{Q}$) but also when it fails (e.g.\ Selmer's famous counter-example of $3x^3+4y^3+5z^3=0$), and we are still some ways off from a complete understanding of this arithmetic phenomena. Nevertheless, Hasse's Principle illustrates something very interesting: it highlights a close relationship between all the different completions of $\mathbb{Q}$, and how this might be leveraged to obtain deep number-theoretic results.

One would thus like to improve our understanding of the completions of $\mathbb{Q}$. Here, however, important subtleties begin to emerge. By Ostrowski's Theorem, we know that all non-trivial completions of $\mathbb{Q}$ are either equivalent to the $p$-adic fields $\mathbb{Q}_p$ (for some prime $p$) or the field of real numbers $\mathbb{R}$. It is well-established that there are many deep disanalogies between the $p$-adics and the reals, which often results in the development of technologies that work well for one but not the other\footnote{For this reason, many often choose to work with just the $p$-adics (e.g.\ via the finite adele ring $\mathbb{A}^{\text{fin}}_{\mathbb{Q}}$) and ignore the reals. See, for instance Balchin-Greenlees' work~\cite{BG} on Adelic Models for tensor-triangulated categories, or Huber's work~\cite{Huber} on the Beilinson-Parshin adeles, where she writes: ``We want to stress that at this stage only a generalization of the finite adeles is found. It is not clear what one should take at infinity, or in fact even what the infinite `places' should be.''}. As Mazur muses~\cite{Maz}:

\begin{quote}``A major theme in the development of Number Theory has been to try to bring $\mathbb{R}$ somewhat more into line with the $p$-adic fields; a major mystery is why $\mathbb{R}$ resists this attempt so strenuously.''
\end{quote}

On this note, let's return to point-free topology for a moment. Suppose there exists a geometric theory $\mathbb{T}$ that axiomatises the completions of $\mathbb{Q}$ up to topological equivalence. By Observation~\ref{obs:conserv}, we know $[\mathbb{T}]$ possesses a generic point $U_\mathbb{T}$ which is conservative in the sense that any geometric property $P$ is satisfied by $U_\mathbb{T}$ iff $P$ is satisfied by all points of $[\mathbb{T}]$, i.e.\ all the completions of $\mathbb{Q}$ up to topological equivalence. Thus, if such a $[\mathbb{T}]$ exists, then this gives a (geometric) framework for reasoning about \textit{all} completions of $\mathbb{Q}$ via the the generic completion of $\mathbb{Q}$. This opens up many natural questions of number-theoretic interest, in particular how geometricity or generic reasoning might provide new insights into this subtle relationship between the $p$-adic fields $\mathbb{Q}_p$ and $\mathbb{R}$.

	The first step towards constructing this topos is understanding when a given completion $K$ of $\mathbb{Q}$ is topologically equivalent to another completion $K'$. Classically, completions of $\mathbb{Q}$ are defined as point-set spaces comprising the Cauchy sequences of $\mathbb{Q}$ with respect to some kind of metric on $\mathbb{Q}$:
	\begin{align*}
	|\cdot|\colon\mathbb{Q}&\longrightarrow \ropen{0, \infty}\\
	x&\longmapsto |x|
	\end{align*}
	known as an \textit{absolute value}. Given two absolute values $|\cdot|_1,|\cdot|_2$, we can define an equivalence relation $\sim$ where $|\cdot|_1\sim|\cdot|_2$ iff there exists some $\alpha\in\lopen{0,1}$ such that $|x|^{\alpha}_1=|x|_2$ or $|x|^{\alpha}_2=|x|_1$ for all $x\in\mathbb{Q}$ such that $x\neq 0$. Such an equivalence class of absolute values is called a \textit{place}, and it turns out that two absolute values belong to the same place iff their completions are topologically-equivalent. Hence, before defining the topos of completions of $\mathbb{Q}$, we would first like to define the classifying topos of places of $\mathbb{Q}$. In order to do this, we need to be able to express the equivalence relation geometrically, which in turn requires a geometric account of real exponentiation --- which was the very content of this paper. In forthcoming work, we shall construct and investigate the properties\footnote{Morally speaking, there ought to be some kind of connection between the classifying topos of places of $\mathbb{Q}$ and the (hypothesised) Arakelov compactification of $\Spec(\mathbb{Z})$. Analysing this topos thus serves as an excellent test problem for bringing into focus how certain essential aspects of the topology and algebra interact with one another, which ought to give us some insight into the deeper mechanics underlying the number theory and homotopy theory.} of the topos of places of $\mathbb{Q}$.

	\section*{Acknowledgements} The authors would like to thank Thomas Streicher for challenging us with this problem --- not only to solve it, but also to think carefully about the potential applications of its solution. We also thank the referees for their thoughtful comments.

	\nocite{BB}
	\nocite{MM}
	\nocite{Vi1}
    \bibliographystyle{alphaurl}
	\bibliography{Exponentiation} 

\end{document}